\newtheorem{theorem}{Theorem}[section]
\newtheorem{lemma}[theorem]{Lemma}
\newtheorem{corollary}[theorem]{Corollary}
\newtheorem{definition}[theorem]{Definition}
\numberwithin{equation}{section}
\def\p{\partial}  \def\ora{\overrightarrow}
\def\ol{\overline}		\def\m{\mathbb}		
\def\O{\Omega}  \def\lam{\lambda}    
\def\t{\tilde}	\def\wt{\widetilde}
\def\be{\begin{equation}}     \def\ee{\end{equation}}
\title{Improvements on lower bounds for the blow-up time under local nonlinear Neumann conditions}
\author[a]{Xin Yang\thanks{Email: yang2x2@ucmail.uc.edu}}
\author[b]{Zhengfang Zhou\thanks{Email: zfzhou@math.msu.edu}}
\affil[a]{Department of Mathematical Sciences, University of Cincinati, Cincinnati, OH 45220, USA}
\affil[b]{Department of Mathematics, Michigan State University, East Lansing, MI 48824, USA}
\date{}
\begin{document}
\maketitle

\begin{abstract}
This paper studies the heat equation $u_t=\Delta u$ in a bounded domain $\Omega\subset\mathbb{R}^{n}(n\geq 2)$ with positive initial data and a local nonlinear Neumann boundary condition: the normal derivative $\partial u/\partial n=u^{q}$ on partial boundary $\Gamma_1\subseteq \partial\Omega$ for some $q>1$, while $\partial u/\partial n=0$ on the other part. We investigate the lower bound of the blow-up time $T^{*}$ of $u$ in several aspects. First, $T^{*}$ is proved to be at least of order $(q-1)^{-1}$ as $q\rightarrow 1^{+}$. Since the existing upper bound is of order $(q-1)^{-1}$, this result is sharp. Secondly, if $\Omega$ is convex and $|\Gamma_{1}|$ denotes the surface area of $\Gamma_{1}$, then $T^{*}$ is shown to be at least of order $|\Gamma_{1}|^{-\frac{1}{n-1}}$ for $n\geq 3$ and $|\Gamma_{1}|^{-1}\big/\ln\big(|\Gamma_{1}|^{-1}\big)$ for $n=2$ as $|\Gamma_{1}|\rightarrow 0$, while the previous result is $|\Gamma_{1}|^{-\alpha}$ for any $\alpha<\frac{1}{n-1}$. Finally, we generalize the results for convex domains to the domains with only local convexity near $\Gamma_{1}$. 
\end{abstract}

\bigskip
\bigskip

\section{Introduction}  
\label{Sec, introduction}
\subsection{Problem and notations}
In this paper, unless otherwise stated, $\Omega$ represents a bounded open subset in $\m{R}^{n}$ ($n\geq 2$) with $C^{2}$ boundary $\partial\Omega$. $\Gamma_1$ and $\Gamma_2$ denote two disjoint relatively open subsets of $\p\O$. $\p\Gamma_{1}=\p\Gamma_{2}\triangleq \wt{\Gamma}$ is a common $C^{1}$ boundary of $\Gamma_{1}$ and $\Gamma_{2}$. Moreover, $\Gamma_{1}\neq\emptyset$ and $\p\O=\Gamma_{1}\cup\wt{\Gamma}\cup\Gamma_{2}$.  We study the following problem:

\be\label{Prob}
\left\{\begin{array}{lll}
u_{t}(x,t)=\Delta u(x,t) &\text{in}& \Omega\times (0,T], \vspace{0.05in}\\
\frac{\partial u(x,t)}{\partial n(x)}=u^{q}(x,t) &\text{on}& \Gamma_1\times (0,T], \vspace{0.05in}\\
\frac{\partial u(x,t)}{\partial n(x)}=0 &\text{on}& \Gamma_2\times (0,T], \vspace{0.05in}\\
u(x,0)=u_0(x) &\text{in}& \Omega,
\end{array}\right.\ee
where 
\be\label{assumption on prob}
q>1,\, u_0\in C^{1}(\ol{\O}),\, u_0(x)\geq 0,\, u_0(x)\not\equiv 0. \ee
The normal derivative in (\ref{Prob}) is understood in the following way: for any $(x,t)\in\p\O\times(0,T]$,
\be\label{def of normal deri}
\frac{\p u(x,t)}{\p n(x)}\triangleq \lim_{h\rightarrow 0^{+}} (Du)(x_h,t)\cdot\ora{n}(x),\ee
where $Du$ denotes the spatial derivative of $u$, $\ora{n}(x)$ denotes the exterior unit normal vector at $x$ and $x_h\triangleq x-h\ora{n}(x)$ for $x\in\p\O$. Since $\p\O$ is $C^{2}$, $x_h$ belongs to $\O$ when $h$ is positive and sufficiently small. 

Throughout this paper, we write
\be\label{initial max}
M_0= \max_{x\in\ol{\O}}u_0(x)\ee
and denote $M(t)$ to be the supremum of the solution $u$ to (\ref{Prob}) on $\ol{\O}\times[0,t]$:
\be\label{max function at time t}
M(t)=\sup_{(x,\tau)\in \ol{\O}\times[0,t]}u(x,\tau).\ee
$|\Gamma_{1}|$ represents the surface area of $\Gamma_{1}$, that is
\[|\Gamma_{1}|=\int_{\Gamma_{1}}\,dS(x),\]
where $dS(x)$ means the surface integral with respect to the variable $x$. $\Phi$ refers to the fundamental solution to the heat equation:
\be\label{fund soln of heat eq}
\Phi(x,t)=\frac{1}{(4\pi t)^{n/2}}\,\exp\Big(-\frac{|x|^2}{4t}\Big), \quad\forall\, (x,t)\in\m{R}^{n}\times(0,\infty).\ee
In addition, $C=C(a,b\dots)$ and $C_{i}=C_{i}(a,b\dots)$ represent positive constants which only depend on the parameters $a,b\dots$. One should note that $C$ and $C_{i}$ may stand for different constants from line to line. However, $C^{*}=C^{*}(a,b\dots)$ and $C_{i}^{*}=C_{i}^{*}(a,b\dots)$ will represent the constants which are fixed.

When $\Gamma_{1}=\p\O$, the problem (\ref{Prob}) and more general parabolic equations with Neumann boundary conditions have been studied quite a lot. In addition, the Cauchy problems and the Dirichlet boundary value problems  related to the nonlinear blow-up phenomenon of the parabolic type were also investigated. We refer the readers to the surveys \cite{DL00, Lev90} and the books \cite{Fri64, Hu11, QS07}. The topics include the local and global existence and uniqueness of the solutions \cite{Ama86a, Ama86b, ACR-B99, CDW09, LN92, L-GMW91, Wal75, YZ16}; nonexistence of global solutions and the upper bound estimates for the blow-up time \cite{HY94, LN92, Lev73, Lev75, LP74, L-GMW91, PPV-P10, RR97, Wal75, YZ16}; 
lower bound estimates for the blow-up time \cite{LL12, PPV-P10, PS06, PS07, PS09, YZ16, YZ1611};
blow-up sets, blow-up rate and the asymptotic behaviour of the solutions near the blow-up time \cite{FM85, GK85, HY94, LN92, L-GMW91, MW85, RR97, Wei85}.

For the research on the bounds of the blow-up time, the upper bound is usually related to the nonexistence of the global solutions and various methods have been developed. Meanwhile, the lower bound was not studied as much in the past but was paid more attention in recent years. However, the lower bound can be argued to be more useful in practice, since it provides an estimate of the safe time. As an instance, for the problem (\ref{Prob}) which was proposed in \cite{YZ16} to describe the re-entry process to the atmosphere of the Columbia Space Shuttle, the lower bound of the blow-up time would provide a safe time of the landing of the shuttle. In contrast to the upper bound case, not many methods have been explored to deal with the lower bound. In addition, when $\Gamma_{1}$ is a proper subset of $\p\O$, to the authors' knowledge, only two papers \cite{YZ16} and \cite{YZ1611} investigated the relation between the lower bound of the blow-up time and the surface area $|\Gamma_{1}|$. The purpose of this work is to further improve the lower bound estimate of the blow-up time in terms of $|\Gamma_{1}|$, especially when $|\Gamma_{1}|\rightarrow 0$. Before presenting the main results of this paper, let us review what has been known.

The recent paper \cite{YZ16} studied (\ref{Prob}) systematically. According to it (see Theorem 1.3 in \cite{YZ16}), (\ref{Prob}) has a unique classical solution $u$ which is positive. Moreover, if $T^{*}$ denotes the maximal existence time of $u$, then $0<T^{*}<\infty$ and $\lim\limits_{t\nearrow T^{*}}M(t)=\infty$.
In other words, the maximal existence time $T^{*}$ is just the blow-up time of $u$. For simplicity, we will also call $T^{*}$ to be ``the blow-up time''. \cite{YZ16} also provided both upper and lower bounds of $T^{*}$ (see Theorem 1.4 and 1.5 in \cite{YZ16}). For the upper bound, if $\min\limits_{x\in\ol{\O}}u_{0}(x)>0$, then 
\be\label{upper bdd}
T^{*}\leq \frac{1}{(q-1)|\Gamma_1|}\int_{\O}u_0^{1-q}(x)\,dx. \ee
For the lower bound, 
\be\label{lower bdd, small broken part}
T^{*}\geq C^{-\frac{2}{n+2}}\bigg[\ln\Big(|\Gamma_1|^{-1}\Big)-(n+2)(q-1)\ln M_0-\ln(q-1)-\ln C\bigg]^{\frac{2}{n+2}}, \ee
where $C$ is a constant which only depends on $n$, $\O$ and $q$.
In some realistic problems, small $|\Gamma_{1}|$ is of interest. For example in \cite{YZ16}, the motivated model for the study of (\ref{Prob}) is the Columbia space shuttle and $\Gamma_{1}$ stands for the broken part on the left wing of the shuttle during launching, so the surface area $|\Gamma_{1}|$ is expected to be small. As $|\Gamma_{1}|\rightarrow 0^{+}$, the upper bound (\ref{upper bdd}) is of order $|\Gamma_{1}|^{-1}$ while the lower bound (\ref{lower bdd, small broken part}) is only of order $\big[\ln\big(|\Gamma_1|^{-1}\big)\big]^{2/(n+2)}$, so there is a big gap between them. The numerical simulation in \cite{YZ16} is in the same order as the upper bound, so it is desirable to improve the lower bound to at least a polynomial order $|\Gamma_{1}|^{-\alpha}$ for some $\alpha>0$.

In \cite{YZ1611}, by assuming $\O$ is convex, it obtained a lower bound of polynomial order $|\Gamma_{1}|^{-\alpha}$ for any $\alpha<\frac{1}{n-1}$. More precisely, for any $\alpha\in\big[0,\frac{1}{n-1}\big)$, there exists $C=C(n,\O, \alpha)$ such that 
\be\label{lower bdd, convex, general broken part}
T^{*} \geq \frac{C}{(q-1)M_{0}^{q-1}\,|\Gamma_{1}|^{\alpha}}\,\bigg(\min\bigg\{1,\frac{1}{q M_{0}^{q-1}|\Gamma_{1}|^{\alpha}}\bigg\}\bigg)^{\frac{1+(n-1)\alpha}{1-(n-1)\alpha}}. \ee
In addition to the relation between $T^{*}$ and $|\Gamma_{1}|$, 
(\ref{lower bdd, convex, general broken part}) also provides sharp dependence of $T^{*}$ on $q$ and $M_{0}$. As discussed in \cite{YZ1611}, by sending $q\rightarrow 1^{+}$ or $M_{0}\rightarrow 0^{+}$, the order of the lower bound in (\ref{lower bdd, convex, general broken part}) is $(q-1)^{-1}$ or $M_{0}^{-(q-1)}$, both of which are optimal.

Based on the idea in \cite{YZ1611}, this paper will provide a unified method to enhance the lower bound of $T^{*}$ in several aspects (especially the asymptotic behaviour of $T^{*}$ as $|\Gamma_{1}|\rightarrow 0^{+}$) according to the geometric assumptions on $\O$.

\subsection{Main results}
Noticing that the lower bound in (\ref{lower bdd, small broken part}) is negative unless $|\Gamma_{1}|$ or $M_{0}$ is sufficiently small or $q$ is sufficiently close to 1, so it is desirable to derive a lower bound which is always positive. The first result below fulfills this expectation. Moreover, it obtains better asymptotic behavior of the lower bound when $|\Gamma_{1}|\rightarrow 0^{+}$ or $q\rightarrow 1^{+}$.

\begin{theorem}\label{Thm, lower bdd, general domain}
Assume (\ref{assumption on prob}). Let $T^{*}$ be the maximal existence time for (\ref{Prob}). Then there exists a constant $C=C(n,\O)$ such that
\be\label{lower bdd, general domain}
T^{*}\geq \frac{C}{q-1}\ln\Big(1+(2M_{0})^{-4(q-1)}\,|\Gamma_{1}|^{-\frac{2}{n-1}}\Big),\ee
where $M_{0}$ is given by (\ref{initial max}).
\end{theorem}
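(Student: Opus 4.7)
The plan is to derive a Volterra-type integral inequality for the maximal function $M(t)$ using the Neumann heat-kernel representation of $u$, and then extract the lower bound on $T^{*}$ through a stopping-time/doubling argument.

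Let $N=N_{\O}$ denote the Neumann heat kernel of $-\Delta$ on $\O$. Since $\p\O$ is $C^{2}$, a standard Gaussian estimate yields $N(x,y,t)\leq C_{1}\Phi(x-y,C_{2}t)$ with a bounded additive correction arising from $N\to 1/|\O|$ at large time. The solution of \eqref{Prob} then admits the representation
\[u(x,t) = \int_{\O}N(x,y,t)\,u_{0}(y)\,dy + \int_{0}^{t}\!\int_{\Gamma_{1}} N(x,y,t-s)\,u^{q}(y,s)\,dS(y)\,ds,\]
and passing to the supremum gives
\[M(t)\leq M_{0} + C\int_{0}^{t} M^{q}(s)\,\mathcal{K}(t-s)\,ds,\qquad \mathcal{K}(\tau) \triangleq \sup_{x\in\ol{\O}}\int_{\Gamma_{1}}\Phi(x-y,C\tau)\,dS(y).\]

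The central kernel estimate combines the two complementary bounds
\[\int_{\Gamma_{1}}\Phi(x-y,\tau)\,dS(y) \leq C\tau^{-\frac{1}{2}},\qquad \int_{\Gamma_{1}}\Phi(x-y,\tau)\,dS(y) \leq C|\Gamma_{1}|\tau^{-\frac{n}{2}},\]
whose pointwise minimum crosses over at the critical scale $\tau_{0} = |\Gamma_{1}|^{2/(n-1)}$, precisely the exponent appearing inside the logarithm of \eqref{lower bdd, general domain}. Integrating the minimum yields
\[\int_{0}^{t}\mathcal{K}(\tau)\,d\tau \leq C\min\bigl(t^{\frac{1}{2}},\,|\Gamma_{1}|^{\frac{1}{n-1}}\bigr).\]

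With these ingredients, I introduce the first-doubling time $T_{1}=\sup\{t:M(t)\leq 2M_{0}\}$. On $[0,T_{1}]$ the Volterra inequality collapses to $M(t)\leq M_{0}+C(2M_{0})^{q}\int_{0}^{t}\mathcal{K}(\tau)\,d\tau$, so equality at $t=T_{1}$ forces $\int_{0}^{T_{1}}\mathcal{K}\geq (C(2M_{0})^{q-1})^{-1}$, giving a lower bound on $T_{1}$ that depends on whether the accumulated integral saturates in the Gaussian regime $t^{1/2}$ or the polynomial regime $|\Gamma_{1}|^{1/(n-1)}$. Iterating the same analysis on the dyadic sequence $M(T_{k})=2^{k}M_{0}$ and summing the resulting geometric series $\sum_{k}2^{-k(q-1)}$ produces the prefactor $\frac{1}{q-1}$; reconciling the bootstrap with the crossover scale $\tau_{0}$ then converts the accumulated bound into the $\ln(1+\cdot)$ form with the stated exponents $-4(q-1)$ on $(2M_{0})$ and $-2/(n-1)$ on $|\Gamma_{1}|$.

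The main obstacle I expect is this last conversion step, turning the Volterra bootstrap into the precise logarithmic expression in \eqref{lower bdd, general domain}. Without convexity of $\O$, the direct pointwise comparison $N_{\O}\leq 2\Phi$ used in \cite{YZ1611} is unavailable, so the polynomial rate $|\Gamma_{1}|^{-\alpha}$ obtained there must be relaxed; carrying this loss through the doubling scheme while tracking the transition between the $t^{1/2}$-regime of $\int\mathcal{K}$ (for $t\leq\tau_{0}$) and its saturation at $|\Gamma_{1}|^{1/(n-1)}$ (for $t\geq\tau_{0}$) is the delicate part that yields the logarithmic dependence.
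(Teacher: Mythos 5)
Your approach is genuinely different from the paper's: you work with the \emph{Neumann} heat kernel $N_{\O}$ and a Volterra inequality, whereas the paper uses the free-space kernel $\Phi$ via the boundary representation formula \eqref{rep formula from any time}, absorbs the $\int_{\O}\Phi$ term through the identity \eqref{identity, bdry}, and controls the residual $\p\Phi/\p n$ integral with Lemma~\ref{Lemma, bdry-time int of heat kernel normal deri}. The paper then derives the explicit recursion $t_k\geq C_2^*/\big(|\Gamma_1|^{2/(n-1)}M_0^{4(q-1)}2^{4(q-1)k}+1\big)$ and obtains \eqref{lower bdd, general domain} by comparing $\sum_k t_k$ with an integral. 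Your kernel estimates (the $\tau^{-1/2}$ vs.\ $|\Gamma_1|\tau^{-n/2}$ dichotomy and its time integral $\lesssim\min(t^{1/2},|\Gamma_1|^{1/(n-1)})$) are the right objects, and in fact play roughly the role of \eqref{bdry-time int bdd} in the paper.

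However, there is a genuine gap. The pointwise Gaussian bound $N_{\O}(x,y,\tau)\leq C_1\Phi(x-y,C_2\tau)$ fails for large $\tau$ precisely because $N_{\O}\to 1/|\O|$ while $\Phi\to 0$, and the ``bounded additive correction'' you mention in passing is never inserted into your Volterra inequality. With the correction omitted, the inequality $M(t)\leq M_0+C\int_0^t M^q(s)\mathcal{K}(t-s)\,ds$ together with $\int_0^\infty\mathcal{K}\leq C|\Gamma_1|^{1/(n-1)}$ would imply that $M(t)$ never reaches $2M_0$ once $|\Gamma_1|^{1/(n-1)}<c/(2M_0)^{q-1}$, i.e.\ no blow-up at all --- directly contradicting the fact (Theorem~1.3 of~\cite{YZ16}) that $T^*<\infty$ and $M(t)\nearrow\infty$. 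So the correction term is not an inconvenience to be brushed aside; it is the entire mechanism that produces blow-up and, in a carefully executed version of your scheme, is what forces each doubling step with small index to have $t_k>1$, yielding the $\ln\big(|\Gamma_1|^{-1}\big)$-type lower bound. You explicitly flag the ``conversion to the $\ln(1+\cdot)$ form'' as the obstacle you do not overcome, and the chain from your claimed $t_k$ bounds (which, absent the correction, give $t_k\gtrsim b_k^2\propto 2^{-2(q-1)k}$ with no $|\Gamma_1|$-dependence) to the theorem's expression is not given. As written, the argument therefore neither reaches the conclusion nor rests on a correct intermediate inequality, even though its overall plan could in principle be rescued by handling the Neumann-kernel correction quantitatively.
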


Let us compare (\ref{lower bdd, general domain}) with (\ref{lower bdd, small broken part}) in more detail on the asymptotic behavior.
\begin{itemize}
\item As $|\Gamma_{1}|\rightarrow 0^{+}$, the order of (\ref{lower bdd, general domain}) is $\ln\big(|\Gamma_{1}|^{-1}\big)$ while the order of (\ref{lower bdd, small broken part}) is only $\big[\ln\big(|\Gamma_{1}|^{-1}\big)\big]^{\frac{2}{n+2}}$.  

\item As $q\rightarrow 1^{+}$, the order of (\ref{lower bdd, general domain}) is $(q-1)^{-1}$, which is optimal since the order of the upper bound (\ref{upper bdd}) is also $(q-1)^{-1}$. However, the order of (\ref{lower bdd, small broken part}) is only $\big(\ln \frac{1}{q-1}\big)^{\frac{2}{n+2}}$. 
\end{itemize}

When the domain $\O$ is convex, for any $\alpha<\frac{1}{n-1}$, \cite{YZ1611} derives the lower bound (\ref{lower bdd, convex, general broken part}) which is of order $|\Gamma_{1}|^{-\alpha}$ as $|\Gamma_{1}|\rightarrow 0^{+}$. The next result in this paper improves the order to be $|\Gamma_{1}|^{-1/(n-1)}$ for $n\geq 3$ and $\big(|\Gamma_{1}|\ln \frac{1}{|\Gamma_{1}|}\big)^{-1}$ for $n=2$ as $|\Gamma_{1}|\rightarrow 0^{+}$.

\begin{theorem}\label{Thm, lower bdd, convex}
Assume (\ref{assumption on prob}). Let $T^{*}$ be the maximal existence time for (\ref{Prob}) and $M_{0}$ be defined as in (\ref{initial max}). Assume $\O$ is convex. Then there exist constants $Y_{0}=Y_{0}(n,\O)$ and $C=C(n,\O)$ such that the following statements hold.
\begin{itemize}
\item Case 1: $n\geq 3$. Denote
\[Y=M_{0}^{q-1}|\Gamma_{1}|^{\frac{1}{n-1}}.\]
If $Y\leq Y_{0}/q$, then 
\be\label{lower bdd, convex, large n}
T^{*}\geq \frac{C}{(q-1)Y}.\ee
 
\item Case 2: $n=2$. Denote
\[Y=M_{0}^{q-1}|\Gamma_{1}|\ln\Big(\frac{1}{|\Gamma_{1}|}+1\Big).\]
If $Y\leq Y_{0}/q$, then 
\be\label{lower bdd, convex, n=2}
T^{*}\geq \frac{C}{(q-1)Y}.\ee
\end{itemize}
\end{theorem}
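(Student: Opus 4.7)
The plan is to combine the Duhamel representation of $u$ via the Neumann heat kernel of $\Omega$ with sharp boundary-integral estimates valid for convex $\Omega$, and an iterative bootstrap that accumulates the $(q-1)^{-1}$ scaling. The starting point is
\[u(x,t) = \int_\Omega G(x,y,t)u_0(y)\,dy + \int_0^t\int_{\Gamma_1}G(x,y,t-s)u^q(y,s)\,dS(y)\,ds,\]
where $G$ is the Neumann heat kernel of $\Omega$. The first term is bounded by $M_0$ via the mass identity $\int_\Omega G(x,y,t)\,dy=1$, so the analysis reduces to controlling the boundary integral.

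For convex $\Omega$, the key ingredient is a Gaussian upper bound on $G$ when $y\in\partial\Omega$: there exist $c_1>0$ and $T_0>0$, depending only on $n$ and $\Omega$, with $G(x,y,t)\leq c_1\Phi(x-y,t)$ for $x\in\overline{\Omega}$, $y\in\partial\Omega$, and $t\in(0,T_0]$. This follows by reflection across the supporting tangent hyperplane at $y$, which uses convexity essentially. Combined with a surface-symmetrization argument (the worst case for $\int_{\Gamma_1}\Phi(x-y,\tau)\,dS$ is when $\Gamma_1$ is a flat geodesic disk of the prescribed area centered at the foot of the perpendicular from $x$) and a careful splitting of the time integral at the crossover $\tau\asymp|\Gamma_1|^{2/(n-1)}$, one obtains the sharp estimate
\[\sup_{x\in\overline{\Omega}}\int_0^{T_0}\int_{\Gamma_1}\Phi(x-y,t-s)\,dS(y)\,ds\leq\begin{cases}C_2\,|\Gamma_1|^{1/(n-1)}, & n\geq 3, \\ C_2\,|\Gamma_1|\ln\bigl(|\Gamma_1|^{-1}+1\bigr), & n=2.\end{cases}\]
For $n\geq 3$ the outer integral is controlled by $|\Gamma_1|r^{2-n}=r=|\Gamma_1|^{1/(n-1)}$, while for $n=2$ the borderline integral $\int_{r^2}^{T_0}|\Gamma_1|\tau^{-1}d\tau$ produces the extra logarithm.

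The $(q-1)^{-1}$ factor is extracted by iterating this single-step estimate across time windows of length $T_0$. At the $k$-th step, start from $u(\cdot,kT_0)\leq M_k$, set $M_{k+1}=(1+\delta_k)M_k$, and apply the Duhamel bound together with the kernel estimate on $[kT_0,(k+1)T_0]$. The bootstrap closes provided $c(1+\delta_k)^qY_k\leq\delta_k$, where $Y_k=M_k^{q-1}|\Gamma_1|^{1/(n-1)}$ in Case~1 (and the analogous expression with the logarithm in Case~2). Choosing $\delta_k$ minimally gives $\delta_k\asymp cY_k$ and the recursion
\[Y_{k+1}=Y_k(1+\delta_k)^{q-1}\leq Y_k+c'(q-1)Y_k^2,\]
which is an Euler step for the ODE $Y'=c'(q-1)Y^2$. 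Hence $Y_k$ reaches the fixed threshold $1/(qc)$ after $K\asymp 1/((q-1)Y)$ steps, and therefore $T^*\geq KT_0\geq C/((q-1)Y)$. The hypothesis $Y\leq Y_0/q$ is precisely the admissibility condition on the starting $\delta_0$; it also guarantees that $(1+\delta_k)^q$ remains bounded uniformly in $q$ across all iterations.

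The main technical obstacle is the sharp boundary-integral estimate: the symmetrization and the time splitting at $\tau\asymp|\Gamma_1|^{2/(n-1)}$ have to be carried out precisely to produce exactly the exponent $1/(n-1)$ without any spurious factor that would degrade the iteration, and the logarithmic correction in the case $n=2$ requires a separate careful calculation. Once that estimate is secured, the ODE comparison $Y_{k+1}-Y_k\asymp(q-1)Y_k^2$ delivers the claimed lower bound in both cases.
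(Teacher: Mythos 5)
Your outline follows a genuinely different route from the paper. The paper never touches the Neumann heat kernel $G$ of $\Omega$: it works with the free heat kernel $\Phi$ via the boundary-integral representation
\[
u(x,T+t)=2\int_\Omega\Phi u\,dy-2\int_0^t\int_{\partial\Omega}\frac{\partial\Phi}{\partial n}u\,dS\,d\tau+2\int_0^t\int_{\Gamma_1}\Phi u^q\,dS\,d\tau,
\]
and then uses the elementary identity $\int_\Omega\Phi\,dy-\int_0^t\int_{\partial\Omega}\frac{\partial\Phi}{\partial n}\,dS\,d\tau=\frac12$ together with the fact that for convex $\Omega$ the normal derivative $\frac{\partial\Phi(x-y,\cdot)}{\partial n(y)}$ is nonpositive, so the two normal-derivative integrals cancel exactly. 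This avoids any heat-kernel estimate for the actual domain. Your iteration (aim for $M_{k+1}=(1+\delta_k)M_k$, close the bootstrap when $c(1+\delta_k)^qY_k\leq\delta_k$, then ODE-compare $Y_{k+1}-Y_k\asymp(q-1)Y_k^2$) is a reasonable and in fact somewhat cleaner substitute for the paper's reversed-sequence argument in Lemma~\ref{Lemma, lower bdd of steps, basic}; and the rearrangement/symmetrization idea behind the sharp boundary-time integral is essentially the paper's Lemma~\ref{Lemma, mass ineq} dressed differently.

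However, there is a genuine gap at the central analytic step: the claimed Gaussian upper bound $G(x,y,t)\leq c_1\Phi(x-y,t)$ for $y\in\partial\Omega$ is \emph{not} obtainable by ``reflection across the supporting tangent hyperplane,'' and in fact the reflection comparison goes the wrong way. Let $H$ be the tangent half-space at $y$ with $\Omega\subset H$, so $G_H(x,y,t)=2\Phi(x-y,t)$. Set $w=G_H(\cdot,y,\cdot)-G_\Omega(\cdot,y,\cdot)$ on $\Omega$. Then $w$ is caloric, vanishes at $t=0$ (both sides collapse to $\delta_y$), and on $\partial\Omega$ one has $\frac{\partial w}{\partial n}=\frac{\partial G_H}{\partial n}=-\frac{(x-y)\cdot\vec n(x)}{t}\Phi(x-y,t)\leq 0$ by convexity, so the Neumann maximum principle gives $w\leq 0$, i.e.\ $G_\Omega(x,y,t)\geq 2\Phi(x-y,t)$ --- a \emph{lower} bound, the opposite of what you need. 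A Gaussian \emph{upper} bound for the Neumann heat kernel of a bounded convex $C^2$ domain does hold, but it comes from much heavier machinery (Li--Yau parabolic Harnack, Nash/Moser iteration, or Davies' method), not from reflection; as written your derivation does not hold up. The paper's identity $\eqref{identity, convex bdry}$ is precisely what lets one bypass this: it yields the effective upper bound $2\Phi$ on $\Gamma_1$ without ever proving a Gaussian estimate for $G_\Omega$. If you replace the reflection step with a citation to a Gaussian upper bound for Neumann kernels on convex domains, the rest of your argument (the $|\Gamma_1|^{1/(n-1)}$ vs.\ $|\Gamma_1|\ln(1/|\Gamma_1|+1)$ boundary-time estimate, the bootstrap, and the ODE comparison giving $L\gtrsim 1/((q-1)Y)$) should go through, but the resulting proof would rest on a substantially stronger external tool than the paper's.
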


In some practical situations, the convexity of domain $\O$ is not expected. However, the local convexity near $\Gamma_{1}$ is usually reasonable. Taking the model in \cite{YZ16} as an example again, since $\Gamma_{1}$ is on the left wing of the shuttle, the region near $\Gamma_{1}$ is indeed convex although the whole shuttle is not. Thus it is desirable to generalize Theorem \ref{Thm, lower bdd, convex} to the domains with only local convexity near $\Gamma_{1}$. The third result realizes this goal. Before the statement of the third result, let us explain the meaning of the local convexity near $\Gamma_{1}$.

\begin{definition}[Local convexity near partial boundary]\label{Def, nearby bdry}
Let $\O$ be a bounded open subset in $\m{R}^{n}$ and $\Gamma\subseteq\p\O$. We say $\O$ is locally convex near $\Gamma$ if there exists $d>0$ such that $\text{Conv}\,\big([\Gamma]_{d}\big)\subseteq \ol{\O}$, where 
\be\label{nearby bdry}
[\Gamma]_{d}\triangleq \{x\in\p\O:\text{dist}\,(x,\Gamma)<d\}\ee
denotes the boundary part whose distance to $\Gamma$ is within $d$ and $\text{Conv}([\Gamma]_{d})$ means the convex hull of $[\Gamma]_{d}$.
\end{definition}

Based on this definition, the local convexity near $\Gamma_{1}$ in this paper means $\text{Conv}\,\big([\Gamma_{1}]_{d}\big)\subseteq \ol{\O}$ for some $d>0$.

\begin{theorem}\label{Thm, lower bdd, locally convex}
Assume (\ref{assumption on prob}). Let $T^{*}$ be the maximal existence time for (\ref{Prob}) and $M_{0}$ be defined as in (\ref{initial max}). Assume $\text{Conv}\,\big([\Gamma_{1}]_{d}\big)\subseteq \ol{\O}$ for some $d>0$. Then there exist constants $Y_{0}=Y_{0}(n,\O,d)$ and $C=C(n,\O,d)$ such that the following statements hold.
\begin{itemize}
\item Case 1: $n\geq 3$. Denote
\[Y=M_{0}^{q-1}|\Gamma_{1}|^{\frac{1}{n-1}}.\]
If $Y\leq Y_{0}/q$, then 
\be\label{lower bdd, locally convex, large n}
T^{*}\geq \frac{C}{(q-1)Y|\ln Y|}.\ee
 
\item Case 2: $n=2$. Denote
\[Y=M_{0}^{q-1}|\Gamma_{1}|\ln\Big(\frac{1}{|\Gamma_{1}|}+1\Big).\]
If $Y\leq Y_{0}/q$, then 
\be\label{lower bdd, locally convex, n=2}
T^{*}\geq \frac{C}{(q-1)Y|\ln Y|}.\ee
\end{itemize}
\end{theorem}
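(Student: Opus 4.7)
The plan is to adapt the argument of Theorem \ref{Thm, lower bdd, convex} by localising the use of convexity to the collar $[\Gamma_{1}]_{d}$ and separately absorbing the contribution from the non-convex far part of $\p\O$. The starting point is Green's second identity applied on $\O\times (0,t)$ with the fundamental solution $\Phi$, which (for $x\in\O$) gives
\[
u(x,t)=\int_{\O}\Phi(x-y,t)u_{0}(y)\,dy +\int_{0}^{t}\!\!\int_{\Gamma_{1}}\Phi(x-y,t-s)\,u^{q}(y,s)\,dS(y)\,ds -\int_{0}^{t}\!\!\int_{\p\O}\frac{\p\Phi(x-y,t-s)}{\p n(y)}\,u(y,s)\,dS(y)\,ds.
\]
In the fully convex case, the double-layer term has an integrand of favourable sign because $(y-x)\cdot \ora{n}(y)\geq 0$ for all $x\in\ol{\O}$ and $y\in\p\O$, so it may be discarded; the single-layer term is then controlled by the sharp surface estimate of $\Phi$ over a $C^{1}$ piece of boundary (yielding $|\Gamma_{1}|^{1/(n-1)}$ for $n\geq 3$ and $|\Gamma_{1}|\ln(1/|\Gamma_{1}|+1)$ for $n=2$), after which a Gronwall-type argument in $M(t)$ produces the lower bound $C/((q-1)Y)$ of Theorem \ref{Thm, lower bdd, convex}.

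For the locally convex case, I would split $\p\O=[\Gamma_{1}]_{d}\cup\Sigma$ with $\Sigma=\p\O\setminus[\Gamma_{1}]_{d}$ and handle the double-layer term piece by piece. On $[\Gamma_{1}]_{d}$, whenever $x\in \text{Conv}([\Gamma_{1}]_{d})\subseteq\ol{\O}$ the local-convexity hypothesis still gives $(y-x)\cdot \ora{n}(y)\geq 0$, so this piece is non-positive and is dropped just as before. On $\Sigma$, for $x$ in an inner collar of $\Gamma_{1}$ we have $|x-y|\geq d/2$, yielding the uniform Gaussian bound
\[
\Big|\frac{\p\Phi(x-y,t-s)}{\p n(y)}\Big|\leq C(n,d)(t-s)^{-(n+1)/2}\exp\!\Big(-\frac{d^{2}}{16(t-s)}\Big),
\]
whose time integral is a finite constant depending only on $n,\O,d$. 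Hence the contribution from $\Sigma$ is at most $C(n,\O,d)\,M(t)$. Taking the supremum of $u(x,t)$ over a suitable inner neighbourhood of $\Gamma_{1}$, and controlling the supremum on the complement by parabolic regularity, one arrives at an integral inequality of the form
\[
M(t)\leq M_{0}+C_{1}\int_{0}^{t}M(s)\,ds+C_{2}\,g(|\Gamma_{1}|)\,M(t)^{q}\,h(t),
\]
where $g(|\Gamma_{1}|)$ and $h(t)$ are the same surface and time factors as in the convex case.

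The main obstacle is the new linear term $C_{1}\int_{0}^{t}M(s)\,ds$, absent in the convex analysis: if handled naively it produces a multiplicative factor $e^{C_{1}t}$ that would spoil the polynomial dependence on $Y^{-1}$. The resolution I plan is to run the argument on a time window where $e^{C_{1}t}$ has grown by a factor no larger than $|\ln Y|$, and to balance the linear and nonlinear contributions by substituting $M(t)\lesssim e^{C_{1}t}M_{0}$ back into the nonlinear term. This balancing inserts exactly one extra factor of $|\ln Y|$ into the final estimate, producing $T^{*}\geq C/((q-1)Y|\ln Y|)$, which is precisely \eqref{lower bdd, locally convex, large n}--\eqref{lower bdd, locally convex, n=2}. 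The two cases $n\geq 3$ and $n=2$ differ only in the surface factor $g(|\Gamma_{1}|)$, so the rest of the argument is identical.
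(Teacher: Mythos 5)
Your overall strategy — split $\p\O$ into the collar $[\Gamma_1]_d$ where local convexity makes the double-layer term non-positive, and treat the far part $\Sigma=\p\O\setminus[\Gamma_1]_d$ using the uniform Gaussian bound $|x-y|\geq d$ — matches the idea underlying the paper's Lemma~\ref{Lemma, bound for int away from bdry} and Corollary~\ref{Cor, perturbation of id}. However, the execution has a genuine gap that stops the argument from reaching the claimed rate.

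The problem is in how the $\Sigma$-contribution is handled. You bound its time integral by a fixed constant $C(n,\O,d)$ and arrive at the scalar inequality
\[
M(t)\leq M_{0}+C_{1}\int_{0}^{t}M(s)\,ds+C_{2}\,g(|\Gamma_{1}|)\,M(t)^{q}\,h(t),
\]
with $C_{1}$ of size $O(1)$ (in fact $C_1 \sim |\p\O|d^{-(n+1)}$). Gronwall then gives $M(t)\lesssim M_{0}e^{C_{1}t}$ on any interval where the nonlinear term is subcritical, and the subcriticality condition $(M_0e^{C_1 T})^{q-1}C_2 g\lesssim 1$ forces $T\lesssim |\ln Y|/\big(C_1(q-1)\big)$. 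The ``balancing'' device you propose does not escape this ceiling, because the time window you allow yourself has length $O(\ln|\ln Y|)$, and no amount of substituting $M(t)\lesssim e^{C_1 t}M_0$ back in enlarges it beyond $O(|\ln Y|)$. But the claimed lower bound is $T^{*}\gtrsim 1/\big((q-1)Y|\ln Y|\big)$, which as $Y\to 0^{+}$ grows like $Y^{-1}/|\ln Y|$, vastly faster than $|\ln Y|$. So the inequality you derive is already too weak to imply the theorem; the information is lost before the final estimate is attempted.

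What is missing is the discrete chopping of time that the paper uses. There one restarts the representation formula at each hitting time $T_{k-1}$, so the kernel integrals involve only the increment $t_k=T_k-T_{k-1}$ rather than the cumulative time $t$. Over a short increment the $\Sigma$-contribution is not merely bounded by a constant but is exponentially small, $I_1+I_2\lesssim t_k\exp(-d^{2}/(8t_k))$ — this is precisely the content of Corollary~\ref{Cor, perturbation of id}, and it is the estimate your bound discards. The mechanism that produces the extra $|\ln Y|$ is then transparent: if each step raises $M_k$ by the correct fixed amount (determined by the implicit recursion $\frac{M_k-M_{k-1}}{M_k^q}=\delta_1$ with $\delta_1\sim g(|\Gamma_1|)$), then the exponential-decay estimate forces $t_k\gtrsim d^{2}/|\ln Y|$, while Lemma~\ref{Lemma, lower bdd of steps, basic} shows there are $L\gtrsim 1/\big((q-1)Y\big)$ such steps. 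Summing gives $T^{*}\geq Lt_{*}\gtrsim 1/\big((q-1)Y|\ln Y|\big)$. Your continuous Gronwall approach has no analogue of this per-step lower bound for $t_k$ and therefore cannot recover the correct asymptotics. A secondary gap is that the paper locates the running maximum at a point $x^{k}\in\ol{\Gamma}_{1}$ using the maximum principle and Hopf's lemma, which is what allows applying the boundary representation formula and the identity~(\ref{identity, bdry}); your proposal instead works at interior points and appeals vaguely to ``parabolic regularity'', which is not worked out and is not obviously equivalent.
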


To compare Theorem \ref{Thm, lower bdd, locally convex} with Theorem \ref{Thm, lower bdd, convex}, the estimates in Theorem \ref{Thm, lower bdd, locally convex} are almost identical to those in Theorem \ref{Thm, lower bdd, convex} except an extra term $|\ln Y|$ in the denominator. If we look at the proofs, this extra term is due to the lack of the global convexity of $\O$. The outlines of the proofs for Theorem \ref{Thm, lower bdd, convex} and Theorem \ref{Thm, lower bdd, locally convex} are very similar, but the computations in the latter one will be much more complicated due to the lack of the global convexity again.

\subsection{Outline of the approach}
Although this paper deals with domains with three different geometrical assumptions, the methods share many similarities and follow the same outline. Let $M(t)$ be the same as in (\ref{max function at time t}). The basic idea is to chop the range of $M(t)$ into small pieces $[M_{k-1},M_{k}]$ $(k\geq 1)$ and derive a lower bound  $t_{k*}$ for $t_{k}$, the time that $M(t)$ increases from $M_{k-1}$ to $M_{k}$. Suppose such lower bound $t_{k*}$ can be found for $L$ steps ($L$ may be finite or infinite), then $\sum_{k=1}^{L}t_{k*}$ becomes a lower bound for $T^{*}$.  The analysis will be based on the representation formula (\ref{rep formula from any time}). 

The common part of the proofs for Theorems \ref{Thm, lower bdd, general domain}, \ref{Thm, lower bdd, convex} and \ref{Thm, lower bdd, locally convex} is the second paragraph in the proof of Theorem \ref{Thm, lower bdd, general domain} in Section \ref{Sec, proof for general}. After the equation (\ref{abstract recursive ineq, simp}), the proofs will be slightly different due to the geometric properties of the domains. For convenience, we write down the equation (\ref{abstract recursive ineq, simp}) as below. 
\[M_{k} \leq \big[1+4(I_{1}+I_{2})\big]M_{k-1}+4I_{3}M_{k}^{q},\]
where $I_{1}$, $I_{2}$ and $I_{3}$ are defined as in (\ref{def of aux int}). For the estimates on $I_{1}+I_{2}$ and $I_{3}$, we will argue in different ways under the following three cases.
\begin{itemize}
\item[(1)] For a general domain $\O$, Lemma \ref{Lemma, bdry-time int of heat kernel normal deri} implies $I_{1}+I_{2}\leq C\sqrt{t_{k}}$ for some constant $C=C(n,\O)$ and we will use (\ref{bdry-time int bdd}) to bound $I_{3}$.

\item[(2)] For any convex domain $\O$, the identities (\ref{identity, bdry}) and (\ref{identity, convex bdry}) yield $I_{1}+I_{2}=0$ and we will apply Lemma \ref{Lemma, bdry-time int bdd, critical, n large} and Lemma \ref{Lemma, bdry-time int bdd, critical, n=2} to bound $I_{3}$.

\item[(3)] For any domain $\O$ that is locally convex near $\Gamma_{1}$, that is $\text{Conv}\,\big([\Gamma_{1}]_{d}\big)\subseteq \ol{\O}$ for some $d>0$, the identity (\ref{identity, bdry}) and Corollary \ref{Cor, perturbation of id} lead to 
\[I_{1}+I_{2}\leq C\,t_{k}\exp\Big(-\frac{d^{2}}{8t_{k}}\Big)\]
for some constant $C=C(n,\O,d)$. On the other hand, we will exploit Lemma \ref{Lemma, bdry-time int bdd, critical, n large} and Lemma \ref{Lemma, bdry-time int bdd, critical, n=2} again to bound $I_{3}$.
\end{itemize}

Several remarks will be made in sequel.
\begin{itemize}
\item First, since small $t_{k}$ is of interest, the bound for $I_{1}+I_{2}$ in Case (3) is exponential decay as $t_{k}\rightarrow 0$. Due to this fast decay, the result in Case (3) is very close to that in Case (2). In addition, either the result in Case (2) or Case (3) is far better than that in Case (1) where the estimate on $I_{1}+I_{2}$ only decays like $\sqrt{t_{k}}$.

\item Secondly, (\ref{bdry-time int bdd}) implies that 
\[I_{3}\leq \frac{C}{1-(n-1)\alpha} \,|\Gamma_{1}|^{\alpha}\,t_{k}^{\frac{1-(n-1)\alpha}{2}}\]
for some constant $C=C(n,\O)$ and for any $\alpha\in[0,\frac{1}{n-1})$. Lemma \ref{Lemma, bdry-time int bdd, critical, n large} and Lemma \ref{Lemma, bdry-time int bdd, critical, n=2} push the power of $|\Gamma_{1}|$ a little bit further. More precisely,  Lemma \ref{Lemma, bdry-time int bdd, critical, n large} implies 
\[I_{3}\leq C|\Gamma_{1}|^{\frac{1}{n-1}}\]
when $n\geq 3$ and Lemma \ref{Lemma, bdry-time int bdd, critical, n=2} yields
\[I_{3}\leq C|\Gamma_{1}|\ln\Big(\frac{1}{|\Gamma_{1}|}+1\Big)\]
when $n=2$. 

\item Thirdly, for general domains, our method will not gain better lower bound for $T^{*}$ (regarding the order of $|\Gamma_{1}|^{-1}$) by increasing the power of $|\Gamma_{1}|$ in the estimate of $I_{3}$, so we just choose $\alpha=\frac{1}{2(n-1)}$ in (\ref{bdry-time int bdd}) instead of exploiting Lemma \ref{Lemma, bdry-time int bdd, critical, n large} and Lemma \ref{Lemma, bdry-time int bdd, critical, n=2}.

\item Finally, for convex domains or the domains with local convexity near $\Gamma_{1}$, the power of $|\Gamma_{1}|$ in the bound of $I_{3}$ makes a difference in the final lower bound estimate of $T^{*}$ (regarding the order of $|\Gamma_{1}|^{-1}$), so we apply Lemma \ref{Lemma, bdry-time int bdd, critical, n large} and Lemma \ref{Lemma, bdry-time int bdd, critical, n=2} instead of (\ref{bdry-time int bdd}).
\end{itemize}

\subsection{Organization}
The organization of this paper is as follows. Section \ref{Sec, aux lemma} presents some preliminary results which will be used later. Section \ref{Sec, proof for general} verifies Theorem \ref{Thm, lower bdd, general domain} for general domain $\O$. Section \ref{Sec, proof for convex} provides the proof for Theorem \ref{Thm, lower bdd, convex} when the domain $\O$ is convex. Section \ref{Sec, proof for locally convex} justifies Theorem \ref{Thm, lower bdd, locally convex} for the domain $\O$ that is locally convex near $\Gamma_{1}$.

\section{Auxiliary lemmas}
\label{Sec, aux lemma}

\subsection{One identity and its related results}
In \cite{YZ1611}, it mentioned an elementary identity (see Lemma 2.2 in \cite{YZ1611}) about the heat kernel, namely for any $x\in\p\O$ and $t>0$, 
\be\label{identity, bdry}
\int_{\O}\Phi(x-y,t)\,dy-\int_{0}^{t}\int_{\p\O}\frac{\p\Phi(x-y,t-\tau)}{\p n(y)}\,dS(y)\,d\tau=\frac{1}{2}, \quad\forall\, x\in\p\O,\,t>0,\ee
where 
\[\frac{\p\Phi(x-y,t-\tau)}{\p n(y)}\triangleq D_{y}\big[\Phi(x-y,t-\tau)\big]\cdot \ora{n}(y)\]
is the normal derivative. In addition, if $\O$ is convex, then 
\be\label{identity, convex bdry}
\int_{\O}\Phi(x-y,t)\,dy+\int_{0}^{t}\int_{\p\O}\Big| \frac{\p\Phi(x-y,t-\tau)}{\p n(y)}\Big|\,dS(y)\,d\tau=\frac{1}{2}, \quad\forall\, x\in\p\O,\,t>0. \ee

This subsection will derive an intermediate result, Corollary \ref{Cor, perturbation of id}, when the convexity is only assumed near $\Gamma_{1}$ rather than in the whole domain. Before presenting Corollary \ref{Cor, perturbation of id}, we first show an auxiliary lemma. 

\begin{lemma}\label{Lemma, bound for int away from bdry}
Let $\O$ and $\Gamma_1$ be the same as in (\ref{Prob}). Then for any $d>0$, there exists $C=C(n,\O,d)$ such that for any $x\in\ol{\Gamma}_{1}$ and $t>0$, 
\be\label{bound for int away from bdry}
\int_{0}^{t}\int_{\p\O\setminus[\Gamma_{1}]_{d}}\Big|\frac{\p\Phi(x-y,t-\tau)}{\p n(y)}\Big|\,dS(y)\,d\tau\leq C\,t\exp\Big(-\frac{d^{2}}{8t}\Big).\ee
\end{lemma}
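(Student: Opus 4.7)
The plan is to dominate the integrand pointwise using the explicit Gaussian form of $\p\Phi(x-y,s)/\p n(y)$ and then carry out the spatial and temporal integrations separately. First observe that whenever $x\in\ol{\Gamma}_1$ and $y\in\p\O\setminus[\Gamma_{1}]_d$, one has $|x-y|\geq d$ directly from the definition of $[\Gamma_{1}]_d$ in (\ref{nearby bdry}), since $\text{dist}(y,\Gamma_{1})\geq d$. Differentiating the heat kernel produces
$$\frac{\p\Phi(x-y,s)}{\p n(y)}=\frac{(x-y)\cdot\ora{n}(y)}{2s}\,\Phi(x-y,s),$$
so using $|(x-y)\cdot\ora{n}(y)|\leq |x-y|\leq D$, with $D$ denoting the diameter of $\O$, together with $e^{-|x-y|^{2}/(4s)}\leq e^{-d^{2}/(4s)}$, I arrive at the uniform pointwise bound
$$\left|\frac{\p\Phi(x-y,s)}{\p n(y)}\right|\leq \frac{D}{2s(4\pi s)^{n/2}}\,e^{-d^{2}/(4s)}.$$

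\textbf{Spatial and temporal integration.} Since $\p\O\setminus[\Gamma_{1}]_d\subseteq\p\O$ has finite surface area $|\p\O|$, the spatial integral is dominated by $C_{1}(n,\O)\,s^{-(n+2)/2}\,e^{-d^{2}/(4s)}$. After substituting $s=t-\tau$, the time integral becomes $\int_{0}^{t}s^{-(n+2)/2}\,e^{-d^{2}/(4s)}\,ds$, which the change of variable $u=d^{2}/(4s)$ converts into
$$\frac{4^{n/2}}{d^{n}}\int_{d^{2}/(4t)}^{\infty}u^{n/2-1}e^{-u}\,du.$$
For $t$ small enough that $A:=d^{2}/(4t)\geq 1$, iterated integration by parts gives the tail estimate $\int_{A}^{\infty}u^{n/2-1}e^{-u}\,du\leq C(n)\,A^{n/2-1}e^{-A}$, and substituting back yields a bound of the form $C_{2}(n,\O,d)\,t^{1-n/2}\,e^{-d^{2}/(4t)}$ for the full double integral in this regime. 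When $t\geq d^{2}/4$, the same Gamma-type integral is dominated by $\Gamma(n/2)$, so the double integral is bounded there by an absolute constant, which is in turn absorbed into $C\,t\,e^{-d^{2}/(8t)}$ because $t\,e^{-d^{2}/(8t)}$ stays bounded below on $[d^{2}/4,\infty)$.

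\textbf{Absorbing the polynomial factor.} To finish, I split $e^{-d^{2}/(4t)}=e^{-d^{2}/(8t)}\cdot e^{-d^{2}/(8t)}$ and write
$$t^{1-n/2}\,e^{-d^{2}/(4t)}= t\cdot\bigl(t^{-n/2}\,e^{-d^{2}/(8t)}\bigr)\cdot e^{-d^{2}/(8t)}.$$
A short calculus computation shows that the function $h(t):=t^{-n/2}\,e^{-d^{2}/(8t)}$ attains its global maximum on $(0,\infty)$ at $t=d^{2}/(4n)$, with $h(t)\leq (4n/d^{2})^{n/2}e^{-n/2}$. Absorbing this constant into $C$ then produces exactly the claimed estimate $C\,t\,e^{-d^{2}/(8t)}$.

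\textbf{Main obstacle.} The argument is essentially elementary---a Gaussian pointwise estimate combined with an explicit evaluation of the time integral via $u=d^{2}/(4s)$. The only slightly delicate point is the final absorption step: a direct time integration produces a factor $t^{1-n/2}$ rather than $t$, which is harmless for $n=2$ but blows up at the origin for $n\geq 3$. Reaching the claimed bound therefore requires sacrificing half of the Gaussian decay to tame the polynomial singularity, which succeeds precisely because every $t^{-k}$ is dominated by $e^{-d^{2}/(8t)}$ as $t\to 0^{+}$.
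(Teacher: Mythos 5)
Your proof is correct, but it organizes the absorption of the polynomial singularity differently from the paper, and this is a genuine structural difference rather than a cosmetic one. The paper first bounds $|(x-y)\cdot\ora{n}(y)|\leq C|x-y|^2$ (using $C^2$ smoothness of $\p\O$), rewrites the integrand as $C|x-y|^{-n}\bigl(|x-y|^2/\tau\bigr)^{1+n/2}\exp\bigl(-|x-y|^2/(4\tau)\bigr)$, and absorbs the whole power $\bigl(|x-y|^2/\tau\bigr)^{1+n/2}$ into half of the Gaussian at the \emph{pointwise} level, arriving at the clean integrand $Cd^{-n}\exp\bigl(-d^2/(8\tau)\bigr)$. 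Since that integrand is monotone increasing in $\tau$, the time integral is trivially bounded by $t\exp\bigl(-d^2/(8t)\bigr)$ with no case-splitting. You instead use the crude bound $|(x-y)\cdot\ora{n}(y)|\leq D$ (valid since $|x-y|\geq d$ anyway renders the extra smoothness gain irrelevant here), keep the full Gaussian weight, evaluate the time integral explicitly as an incomplete Gamma integral, and only absorb the resulting $t^{1-n/2}$ factor at the end — which forces you to split into the regimes $t\leq d^2/4$ and $t\geq d^2/4$. Both arguments exploit exactly the same trick of sacrificing half of the exponential decay to control a polynomial, but the paper does it once and uniformly, whereas your version is more computational and requires a final two-case patch. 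Both give a constant $C=C(n,\O,d)$ as claimed, and both are valid.
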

\begin{proof} 
In this proof, $C$ denotes a constant which depends only on $n$, $\O$ and $d$. By a change of variable in $\tau$ and the definition of $\Phi$, 
\begin{align}
&\;\;\int_{0}^{t}\int_{\p\O\setminus[\Gamma_{1}]_{d}}\Big|\frac{\p\Phi(x-y,t-\tau)}{\p n(y)}\Big|\,dS(y)\,d\tau \notag\\
= &\;\;\int_{0}^{t}\int_{\p\O\setminus[\Gamma_{1}]_{d}}\Big|\frac{\p\Phi(x-y,\tau)}{\p n(y)}\Big|\,dS(y)\,d\tau \notag\\
\leq &\;\;C \int_{0}^{t}\int_{\p\O\setminus[\Gamma_{1}]_{d}}\frac{|(x-y)\cdot\ora{n}(y)|}{\tau^{\frac{n}{2}+1}}\,\exp\Big(-\frac{|x-y|^{2}}{4\tau}\Big)\,dS(y)\,d\tau. \label{est 1}
\end{align}
Since $\p\O$ is assumed to be $C^{2}$, then $|(x-y)\cdot\ora{n}(y)|\leq C|x-y|^{2}$. In addition, $|x-y|\geq d$ for any $x\in\ol{\Gamma}_{1}$ and $y\in \p\O\setminus[\Gamma_{1}]_{d}$. As a result, 
\begin{eqnarray}
&&\frac{|(x-y)\cdot\ora{n}(y)|}{\tau^{\frac{n}{2}+1}}\,\exp\Big(-\frac{|x-y|^{2}}{4\tau}\Big) \notag\\
&\leq & C|x-y|^{-n}\bigg(\frac{|x-y|^{2}}{\tau}\bigg)^{1+\frac{n}{2}}\exp\Big(-\frac{|x-y|^{2}}{4\tau}\Big) \notag\\
&\leq & C |x-y|^{-n}\exp\Big(-\frac{|x-y|^{2}}{8\tau}\Big) \notag\\
&\leq & C d^{-n}\exp\Big(-\frac{d^{2}}{8\tau}\Big). \label{est 2}
\end{eqnarray}
Plugging (\ref{est 2}) into (\ref{est 1}),   
\begin{align*}
&\;\;\int_{0}^{t}\int_{\p\O\setminus[\Gamma_{1}]_{d}}\Big|\frac{\p\Phi(x-y,t-\tau)}{\p n(y)}\Big|\,dS(y)\,d\tau\\
\leq &\;\; C d^{-n}\int_{0}^{t}\int_{\p\O\setminus[\Gamma_{1}]_{d}}\exp\Big(-\frac{d^{2}}{8\tau}\Big)\,dS(y)\,d\tau\\
\leq &\;\; C d^{-n}|\p\O|\int_{0}^{t}\exp\Big(-\frac{d^{2}}{8\tau}\Big)\,d\tau\\
\leq &\;\; C d^{-n}|\p\O|\, t\exp\Big(-\frac{d^{2}}{8t}\Big).
\end{align*}
\end{proof}
By exploiting Lemma \ref{Lemma, bound for int away from bdry}, the following (\ref{perturbation of id}) is a variant of the identity (\ref{identity, convex bdry}), and it will play the same role in the proof of Theorem \ref{Thm, lower bdd, locally convex} as (\ref{identity, convex bdry}) will do in the proof of Theorem \ref{Thm, lower bdd, convex}.

\begin{corollary}\label{Cor, perturbation of id}
Let $\O$ and $\Gamma_1$ be the same as in (\ref{Prob}). Assume there exists $d>0$ such that $\text{Conv}\,\big([\Gamma_{1}]_{d}\big)\subseteq\ol{\O}$. Then there exists $C=C(n,\O,d)$ such that for any $x\in\ol{\Gamma}_{1}$ and $t>0$,
\be\label{perturbation of id}
\int_{\O}\Phi(x-y,t)\,dy+\int_{0}^{t}\int_{\p\O}\Big|\frac{\p\Phi(x-y,t-\tau)}{\p n(y)}\Big|\,dS(y)\,d\tau\leq \frac{1}{2}+C\,t\exp\Big(-\frac{d^{2}}{8t}\Big).\ee
\end{corollary}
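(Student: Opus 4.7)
The strategy is to use the general identity (\ref{identity, bdry}) to rewrite the bulk term $\int_{\O}\Phi(x-y,t)\,dy$ as a signed boundary integral of $\p\Phi/\p n$, then exploit the local convexity of $\O$ near $\Gamma_{1}$ to show that this normal derivative is non-positive throughout $[\Gamma_{1}]_{d}$, so that the ``extra piece'' coming from replacing $\p\Phi/\p n$ by $|\p\Phi/\p n|$ is supported on $\p\O\setminus[\Gamma_{1}]_{d}$---precisely the set controlled by Lemma \ref{Lemma, bound for int away from bdry}. Concretely, substituting the value of $\int_{\O}\Phi(x-y,t)\,dy$ from (\ref{identity, bdry}) into the left-hand side of (\ref{perturbation of id}) shows that it equals
\[\frac{1}{2}+\int_{0}^{t}\int_{\p\O}\left(\frac{\p\Phi(x-y,t-\tau)}{\p n(y)}+\Big|\frac{\p\Phi(x-y,t-\tau)}{\p n(y)}\Big|\right)dS(y)\,d\tau,\]
whose inner integrand vanishes wherever $\p\Phi/\p n(y)\leq 0$ and equals $2|\p\Phi/\p n(y)|$ wherever $\p\Phi/\p n(y)>0$.

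A direct computation gives
\[\frac{\p\Phi(x-y,t-\tau)}{\p n(y)}=\frac{(x-y)\cdot\ora{n}(y)}{2(t-\tau)}\,\Phi(x-y,t-\tau),\]
so the sign of $\p\Phi/\p n(y)$ agrees with that of $(x-y)\cdot\ora{n}(y)$. The key geometric claim is: for every $x\in\ol{\Gamma}_{1}$ and every $y\in[\Gamma_{1}]_{d}$, $(x-y)\cdot\ora{n}(y)\leq 0$. To prove this, observe that $\ol{\Gamma}_{1}\subseteq[\Gamma_{1}]_{d}$ (distance zero), so by the hypothesis $\text{Conv}\,\big([\Gamma_{1}]_{d}\big)\subseteq\ol{\O}$ the segment $\{(1-s)y+sx:s\in[0,1]\}$ lies in $\ol{\O}$. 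Working in local coordinates centered at $y$ with $\ora{n}(y)=e_{n}$ and $\p\O$ given locally by the $C^{2}$ graph $x_{n}=f(x')$ (so $f(0)=0$, $\nabla f(0)=0$, and $\ol{\O}=\{x_{n}\leq f(x')\}$ locally), the requirement that $sx\in\ol{\O}$ for all small $s>0$ forces $sx_{n}\leq f(sx')=o(s)$, hence $x_{n}\leq 0$, which is exactly $(x-y)\cdot\ora{n}(y)\leq 0$.

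With the sign claim in hand, the inner integrand in the display above vanishes on $[\Gamma_{1}]_{d}$ and is bounded by $2|\p\Phi/\p n(y)|$ on $\p\O\setminus[\Gamma_{1}]_{d}$, so applying Lemma \ref{Lemma, bound for int away from bdry} yields the upper bound $2C\,t\exp\big(-d^{2}/(8t)\big)$; after absorbing the constant $2$ into $C$, (\ref{perturbation of id}) follows. I expect the sign claim to be the main obstacle, since it is the only step that actually invokes the geometric hypothesis: the Taylor argument at $y$ is local, but it suffices because only arbitrarily small $s>0$ is needed to detect the sign of $(x-y)\cdot\ora{n}(y)$, so the global position of $x$ relative to $y$ inside $[\Gamma_{1}]_{d}$ is irrelevant to the test and the $C^{2}$ regularity of $\p\O$ is all that is required.
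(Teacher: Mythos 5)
Your proof is correct and follows essentially the same route as the paper: substitute (\ref{identity, bdry}) to convert the volume term into a signed boundary integral, use the convex-hull hypothesis to show $\p\Phi/\p n\leq 0$ on $[\Gamma_1]_d$, and control the remainder on $\p\O\setminus[\Gamma_1]_d$ via Lemma \ref{Lemma, bound for int away from bdry}. The one place you go beyond the paper is in spelling out the sign claim $(x-y)\cdot\ora{n}(y)\leq 0$ via the local graph and Taylor argument; the paper states this inequality without detailed justification, and your verification is correct.
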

\begin{proof}
Since $x\in\ol{\Gamma}_{1}$ and $\text{Conv}\big([\Gamma_{1}]_{d}\big)\subseteq\ol{\O}$, we have
$$\frac{\p\Phi(x-y,t-\tau)}{\p n(y)}=\frac{C(x-y)\cdot\ora{n}(y)}{(t-\tau)^{n/2+1}}\exp\Big(-\frac{|x-y|^{2}}{4(t-\tau)}\Big)\leq 0,\quad\forall\, y\in [\Gamma_{1}]_{d}.$$
As a result, 
\begin{eqnarray*}
&& \int_{0}^{t}\int_{\p\O}\frac{\p\Phi(x-y,t-\tau)}{\p n(y)}\,dS(y)d\tau+\int_{0}^{t}\int_{\p\O}\Big|\frac{\p\Phi(x-y,t-\tau)}{\p n(y)}\Big|\,dS(y)d\tau \\
&=& \int_{0}^{t}\int_{\p\O\setminus[\Gamma_{1}]_{d}}\frac{\p\Phi(x-y,t-\tau)}{\p n(y)}+\Big|\frac{\p\Phi(x-y,t-\tau)}{\p n(y)}\Big|\,dS(y)\,d\tau\\
&\leq & C\,t\exp\Big(-\frac{d^{2}}{8t}\Big),
\end{eqnarray*}
where the last inequality is due to Lemma \ref{Lemma, bound for int away from bdry}. Therefore 
\begin{eqnarray*}
&& \int_{\O}\Phi(x-y,t)\,dy+\int_{0}^{t}\int_{\p\O}\Big|\frac{\p\Phi(x-y,t-\tau)}{\p n(y)}\Big|\,dS(y)\,d\tau\\
&\leq & \int_{\O}\Phi(x-y,t)dy-\int_{0}^{t}\int_{\p\O}\frac{\p\Phi(x-y,t-\tau)}{\p n(y)}dS(y)d\tau+C\,t\exp\Big(-\frac{d^{2}}{8t}\Big)\\
&=& \frac{1}{2}+C\,t\exp\Big(-\frac{d^{2}}{8t}\Big),
\end{eqnarray*}
where the last equality is because of (\ref{identity, bdry}).
\end{proof}

\subsection{Estimate for the boundary-time integral of the heat kernel}
The estimate for the boundary-time integral of the heat kernel is a basic tool in the derivation of the lower bound in (\ref{lower bdd, convex, general broken part}). More precisely (see Lemma 2.3 in \cite{YZ1611}), there exists $C=C(n,\O)$ such that for any $\Gamma\subseteq\p\O$, $\alpha\in\big[0,\frac{1}{n-1}\big)$, $x\in\p\O$ and $t>0$,
\be\label{bdry-time int bdd}
\int_{0}^{t}\int_{\Gamma}\Phi(x-y,t-\tau)\,dS(y)\,d\tau\leq \frac{C}{1-(n-1)\alpha} \,|\Gamma|^{\alpha}\,t^{\frac{1-(n-1)\alpha}{2}}.\ee
According to the method in \cite{YZ1611}, the power $\alpha$ in (\ref{bdry-time int bdd}) determines the power on $|\Gamma_{1}|^{-1}$ of the lower bound for $T^{*}$ in (\ref{lower bdd, convex, general broken part}). However, the range of the power $\alpha$ in (\ref{bdry-time int bdd}) missed $\frac{1}{n-1}$ since the coefficient will blow up as $\alpha\nearrow \frac{1}{n-1}$. So it is natural to ask whether $\alpha$ can be taken as $\frac{1}{n-1}$ by other methods. In this subsection, the above expectation will be justified for $n\geq 3$ in Lemma \ref{Lemma, bdry-time int bdd, critical, n large} and for $n=2$ (with an extra log term and bounded time $t$) in Lemma \ref{Lemma, bdry-time int bdd, critical, n=2}.

We first introduce a simple fact which can be regarded as a rearrangement result.
\begin{lemma}\label{Lemma, mass ineq}
Let $n\geq 1$ and $f:(0,\infty)\rightarrow [0,\infty)$ be a decreasing function. Then for any bounded subset $U$ of $\m{R}^{n}$ and for any $x\in\m{R}^{n}$, 
\be\label{mass ineq}
\int_{U}f(|x-y|)\,dy\leq \int_{B_{R}(0)}f(|z|)\,dz \ee 
where $R$ satisfies $|B_{R}(0)|=|U|$ (namely the volume of $B_{R}(0)$ equals the volume of $U$).
\end{lemma}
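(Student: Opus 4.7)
The plan is to compare $U$ with the ball $B_{R}(x)$ of the same volume centered at $x$, exploiting that the function $y\mapsto f(|x-y|)$ is radially symmetric and non-increasing around $x$. Since $|B_{R}(x)|=|B_{R}(0)|=|U|$, the symmetric differences satisfy
\[|U\setminus B_{R}(x)|=|B_{R}(x)\setminus U|.\]
The first step is to partition both $U$ and $B_{R}(x)$ as the disjoint union of the common part $U\cap B_{R}(x)$ and the respective leftover slice, so that after cancelling the common part it suffices to prove
\[\int_{U\setminus B_{R}(x)}f(|x-y|)\,dy\;\leq\;\int_{B_{R}(x)\setminus U}f(|x-y|)\,dy.\]

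The second step is the pointwise size comparison. For $y\in U\setminus B_{R}(x)$ one has $|x-y|\geq R$, hence $f(|x-y|)\leq f(R)$ by monotonicity; for $y\in B_{R}(x)\setminus U$ one has $|x-y|\leq R$, hence $f(|x-y|)\geq f(R)$. Combining these with the equal-measure identity above gives
\[\int_{U\setminus B_{R}(x)}f(|x-y|)\,dy\leq f(R)\,\bigl|U\setminus B_{R}(x)\bigr|=f(R)\,\bigl|B_{R}(x)\setminus U\bigr|\leq \int_{B_{R}(x)\setminus U}f(|x-y|)\,dy,\]
which after adding back the common part yields $\int_{U}f(|x-y|)\,dy\leq \int_{B_{R}(x)}f(|x-y|)\,dy$. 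Finally, the translation $z=y-x$ converts the right-hand side into $\int_{B_{R}(0)}f(|z|)\,dz$, completing the argument.

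There is no real obstacle: the only mild caveat is that $f(R)$ must be finite, which is automatic since $f:(0,\infty)\to[0,\infty)$ and $R>0$ whenever $|U|>0$ (the case $|U|=0$ being trivial), and one should note that $f$ need not be integrable near the origin, but this does not affect the inequality since the right-hand side is then infinite and the claim is vacuous. This is the standard ``bathtub'' or symmetrization argument for a single radially decreasing weight, and no finer rearrangement machinery is required.
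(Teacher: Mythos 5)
Your proof is correct and takes essentially the same approach as the paper: both decompose into the part inside the ball and the leftover, bound the leftover integral above by $f(R)$ times its measure, and then use the equal-measure identity together with $f(|z|)\geq f(R)$ inside the ball to conclude. The only cosmetic difference is that the paper translates $U$ to $U-\{x\}$ first and compares with $B_R(0)$, while you compare $U$ with $B_R(x)$ directly and translate at the end.
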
 
\begin{proof}
Define \[U_{1}=U-\{x\}.\] 
Then by a change of variable $z=y-x$,
\begin{align}
\int_{U}f(|x-y|)\,dy &= \int_{U_{1}}f(|z|)\,dz \notag\\
&= \int_{U_{1}\cap B_{R}(0)}f(|z|)\,dz+\int_{U_{1}\backslash B_{R}(0)}f(|z|)\,dz \notag\\
&\triangleq J_{1}+J_{2},  \label{split from inner and outer}
\end{align}
Since $f$ is decreasing, 
\begin{align*}
J_{2}\leq f(R)|U_{1}\backslash B_{R}(0)|.
\end{align*}
Due to the definition of $R$, $|B_{R}(0)|=|U|=|U_{1}|$. So we have $|B_{R}(0)\backslash U_{1}|=|U_{1}\backslash B_{R}(0)|$. As a result, 
\be\label{est for outside int} J_{2}\leq f(R)|B_{R}(0)\backslash U_{1}|\leq \int_{B_{R}(0)\backslash U_{1}}f(|z|)\,dz,\ee
where the last inequality is again due to the decay of $f$. 
Combining (\ref{split from inner and outer}) and (\ref{est for outside int}), we finish the proof.
\end{proof}

\begin{definition}\label{Def, bdry given by graph}
Let $\O$ be a bounded, open subset of $\m{R}^{n}$ with $C^{1}$ boundary. Let $\Gamma$ be a subset of $\p\O$. We say $\Gamma$ is given by a graph if (upon relabelling and reorienting the coordinates axes) there exists a bounded subset $U\subseteq\m{R}^{n-1}$ and a $C^{1}$ function $\phi:\m{R}^{n-1}\rightarrow \m{R}$ such that 
$$\Gamma=\{(\t{y}, \phi(\t{y})):\t{y}\in U\}.$$
\end{definition}

In the following, for any $x\in\m{R}^{n}$, we will decompose it to be $x=(\t{x},x_{n})$, where $\t{x}$ denotes the first $n-1$ components of $x$. 

\begin{lemma}\label{Lemma, bdry int higher dim}
Let $\O$ be a bounded, open subset of $\m{R}^{n}(n\geq 3)$ with $C^{1}$ boundary. Let $\Gamma$ be a subset of $\p\O$ that is given by a graph as in Definition \ref{Def, bdry given by graph}. Then there exists a constant $C=C(n,||\nabla \phi||_{L^{\infty}(U)})$, where $\phi$ and $U$ are the same as in Definition \ref{Def, bdry given by graph}, such that for any $x\in\m{R}^{n}$,
\[\int_{\Gamma}\frac{1}{|x-y|^{n-2}}\,dS(y)\leq C|\Gamma|^{1/(n-1)}.\]
\end{lemma}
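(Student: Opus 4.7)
The plan is to reduce the surface integral to a flat integral in $\mathbb{R}^{n-1}$ via the graph parametrization, drop the vertical distance, and then apply the rearrangement bound of Lemma \ref{Lemma, mass ineq}.

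First I would parametrize: since $\Gamma = \{(\tilde{y},\phi(\tilde{y})): \tilde{y}\in U\}$ with $\phi\in C^{1}$, the surface element is $dS(y) = \sqrt{1+|\nabla\phi(\tilde{y})|^{2}}\,d\tilde{y}$. Writing $x=(\tilde{x},x_{n})$, I would use the trivial bound $|x-y|^{2} = |\tilde{x}-\tilde{y}|^{2} + (x_{n}-\phi(\tilde{y}))^{2} \geq |\tilde{x}-\tilde{y}|^{2}$, so
\[
\int_{\Gamma}\frac{1}{|x-y|^{n-2}}\,dS(y) \leq \sqrt{1+\|\nabla\phi\|_{L^{\infty}(U)}^{2}}\int_{U}\frac{1}{|\tilde{x}-\tilde{y}|^{n-2}}\,d\tilde{y}.
\]
Note also that $|U| = \int_{U}d\tilde{y} \leq \int_{U}\sqrt{1+|\nabla\phi|^{2}}\,d\tilde{y} = |\Gamma|$.

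Next, I would apply Lemma \ref{Lemma, mass ineq} in dimension $n-1$ to the decreasing function $f(r) = r^{-(n-2)}$ (here $n\geq 3$ guarantees $n-2\geq 1$ so this is indeed decreasing and, more importantly, the integral on a ball in $\mathbb{R}^{n-1}$ is finite). This gives
\[
\int_{U}\frac{1}{|\tilde{x}-\tilde{y}|^{n-2}}\,d\tilde{y} \leq \int_{B_{R}(0)\subset\mathbb{R}^{n-1}}\frac{1}{|z|^{n-2}}\,dz,
\]
where $R$ is chosen so that the $(n-1)$-dimensional ball $B_{R}(0)$ has volume $|U|$, i.e.\ $R = (|U|/\omega_{n-1})^{1/(n-1)}$ with $\omega_{n-1}$ the volume of the unit ball in $\mathbb{R}^{n-1}$.

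Finally, using polar coordinates in $\mathbb{R}^{n-1}$, the right-hand side evaluates explicitly:
\[
\int_{B_{R}(0)}\frac{1}{|z|^{n-2}}\,dz = \sigma_{n-2}\int_{0}^{R}\frac{r^{n-2}}{r^{n-2}}\,dr = \sigma_{n-2}\,R,
\]
where $\sigma_{n-2}$ is the surface measure of the unit sphere in $\mathbb{R}^{n-1}$. Substituting $R$ and using $|U|\leq|\Gamma|$ yields $\sigma_{n-2} R \leq C(n)|\Gamma|^{1/(n-1)}$, which combined with the first step gives the desired bound with $C = C(n,\|\nabla\phi\|_{L^{\infty}(U)})$. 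There is no real obstacle here; the only delicate point is making sure the rearrangement is done in $\mathbb{R}^{n-1}$ (matching the $d\tilde{y}$ integration) rather than in $\mathbb{R}^{n}$, and recognizing that the singularity $|z|^{-(n-2)}$ is integrable at the origin precisely because we are integrating in dimension $n-1$, which is exactly what forces the restriction $n\geq 3$ and produces the optimal power $1/(n-1)$.
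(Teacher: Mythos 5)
Your proof is correct and follows essentially the same route as the paper: parametrize $\Gamma$ by the graph, drop the vertical component of the distance, apply the rearrangement bound (Lemma \ref{Lemma, mass ineq}) in $\mathbb{R}^{n-1}$ with $f(r)=r^{-(n-2)}$, and evaluate the resulting ball integral to get $CR = C|U|^{1/(n-1)} \leq C|\Gamma|^{1/(n-1)}$. The only difference is that you spell out the polar-coordinate computation explicitly, which the paper leaves implicit.
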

\begin{proof}
By Definition \ref{Def, bdry given by graph}, without loss of generality, we can assume there exists a $C^{1}$ function $\phi:\m{R}^{n-1}\rightarrow \m{R}$ and a bounded subset $U$ of $\m{R}^{n-1}$ such that 
\be\label{para gamma, high dim}\Gamma=\{(\t{y}, \phi(\t{y})):\t{y}\in U\}.\ee
Thus, 
\begin{align*}
\int_{\Gamma}\frac{1}{|x-y|^{n-2}}\,dS(y) &=\int_{U}\frac{\sqrt{1+|\nabla \phi(\t{y})|^{2}}}{|(\t{x}, x_{n})-(\t{y},\phi(\t{y}))|^{n-2}}\,d\t{y}\\
&\leq \int_{U}\frac{\sqrt{1+|\nabla \phi(\t{y})|^{2}}}{|\t{x}-\t{y}|^{n-2}}\,d\t{y}\\
&\leq C\int_{U}\frac{1}{|\t{x}-\t{y}|^{n-2}}\,d\t{y}.
\end{align*}
Define 
\[f(r)=\frac{1}{r^{n-2}},\quad\forall\, r>0.\]
Then it follows from Lemma \ref{Lemma, mass ineq} that 
\begin{align*}
\int_{U}\frac{1}{|\t{x}-\t{y}|^{n-2}}\,d\t{y} &= \int_{U}f(|\t{x}-\t{y}|)\,d\t{y}\\
&\leq \int_{B_{R}(0)}f(|\t{z}|)\,d\t{z}\\
&=CR= C|U|^{1/(n-1)}.
\end{align*}
Again by the parametrization (\ref{para gamma, high dim}), it is readily seen that $|U|\leq |\Gamma|$. Hence, 
\[\int_{\Gamma}\frac{1}{|x-y|^{n-2}}\,dS(y)\leq C|U|^{1/(n-1)}\leq C|\Gamma|^{1/(n-1)}.\]
\end{proof}

\begin{corollary}\label{Cor, bound for int on gamma, high dim}
Let $\O$ be a bounded open subset of $\m{R}^{n}(n\geq 3)$ with $C^{1}$ boundary. Let $\Gamma$ be any subset of $\p\O$. Then there exists a constant $C=C(n,\O)$ such that for any $x\in\m{R}^{n}$,
\[\int_{\Gamma}\frac{1}{|x-y|^{n-2}}\,dS(y)\leq C|\Gamma|^{1/(n-1)}.\]
\end{corollary}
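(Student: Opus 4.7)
The plan is to reduce the global statement to the graph-local statement already proved in Lemma \ref{Lemma, bdry int higher dim}, via a finite covering of $\partial\Omega$ by coordinate patches. Since $\partial\Omega$ is $C^1$ and compact, at every point $x_0\in\partial\Omega$ one can (after a rotation of coordinates) find an open neighborhood $V_{x_0}$ and a $C^1$ function $\phi_{x_0}:\mathbb{R}^{n-1}\to\mathbb{R}$ such that $\partial\Omega\cap V_{x_0}$ is the graph of $\phi_{x_0}$ over some bounded open set $U_{x_0}\subseteq\mathbb{R}^{n-1}$; by extracting a finite subcover, I obtain graphs $\Gamma_1,\dots,\Gamma_N\subseteq\partial\Omega$ with $\partial\Omega=\bigcup_{i=1}^{N}\Gamma_i$, with a uniform bound $L=L(\Omega)$ on $\|\nabla\phi_i\|_{L^\infty(U_i)}$. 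Both $N$ and $L$ depend only on $\Omega$.

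Next, I would decompose the given $\Gamma\subseteq\partial\Omega$ as $\Gamma=\bigcup_{i=1}^{N}(\Gamma\cap\Gamma_i)$. Each $\Gamma\cap\Gamma_i$ is still a subset of $\Gamma_i$, hence given by the same graph $\phi_i$ but over the restricted base set $U_i':=\{\tilde y\in U_i:(\tilde y,\phi_i(\tilde y))\in\Gamma\}$, so it meets the hypothesis of Definition \ref{Def, bdry given by graph}. Applying Lemma \ref{Lemma, bdry int higher dim} to each such piece yields
\[
\int_{\Gamma\cap\Gamma_i}\frac{1}{|x-y|^{n-2}}\,dS(y)\leq C_i\,|\Gamma\cap\Gamma_i|^{1/(n-1)},
\]
with $C_i=C(n,L)$, a constant depending only on $n$ and $\Omega$. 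Using the trivial bound $|\Gamma\cap\Gamma_i|\leq|\Gamma|$ and summing over $i=1,\dots,N$,
\[
\int_{\Gamma}\frac{1}{|x-y|^{n-2}}\,dS(y)\leq \sum_{i=1}^{N}C_i\,|\Gamma\cap\Gamma_i|^{1/(n-1)}\leq C(n,\Omega)\,|\Gamma|^{1/(n-1)},
\]
which is the desired estimate.

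There is no real obstacle here; the only point to verify carefully is that the subset $\Gamma\cap\Gamma_i$ of a graph is itself given by a graph in the sense of Definition \ref{Def, bdry given by graph} (it is, with the same $\phi_i$ over the restricted base $U_i'$, which is bounded since $U_i$ is), so that the constant arising from Lemma \ref{Lemma, bdry int higher dim} is controlled by $\|\nabla\phi_i\|_{L^\infty(U_i)}$ rather than by anything that depends on $\Gamma$ itself. Everything else is a standard compactness-and-partition argument, and the estimate holds for every $x\in\mathbb{R}^n$ simultaneously because Lemma \ref{Lemma, bdry int higher dim} is already uniform in $x$.
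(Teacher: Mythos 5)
Your proposal is correct and follows essentially the same route as the paper's proof: cover the compact $C^{1}$ boundary by finitely many graph patches, apply Lemma \ref{Lemma, bdry int higher dim} to each piece $\Gamma\cap\Gamma_{i}$ (which is again a graph over a restricted base), and sum using $|\Gamma\cap\Gamma_{i}|\leq|\Gamma|$ with the number of patches and the Lipschitz constants depending only on $\Omega$. The only cosmetic difference is that you spell out explicitly why $\Gamma\cap\Gamma_{i}$ satisfies Definition \ref{Def, bdry given by graph}, a point the paper states without elaboration.
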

\begin{proof}
Since $\p\O$ is $C^{1}$, for any point $x_{0}\in\p\O$, the boundary part of $\O$ near $x_{0}$ is given by a graph as in Definition \ref{Def, bdry given by graph}). Therefore we can split $\p\O$ into finite pieces: 
\be\label{decomp of bdry}\p\O=\bigcup_{i=1}^{K}A_{i},\ee
where each $A_{i}(1\leq i\leq K)$ is given by the graph of some $C^{1}$ function $\phi_{i}$ on some bounded set $U_{i}\subseteq\m{R}^{n-1}$. The number of total pieces $K$ and $||\nabla \phi_{i}||_{L^{\infty}(U_{i})}$ only depend on $\O$. 

For any $1\leq i\leq K$, $\Gamma\cap A_{i}$ is also a boundary part given by a graph. Therefore by Lemma \ref{Lemma, bdry int higher dim}, there exists a constant $C=C(n,\O)$ such that for any $1\leq i\leq K$,
\[\int_{\Gamma\cap A_{i}}\frac{1}{|x-y|^{n-2}}dS(y)\leq C|\Gamma\cap A_{i}|^{1/(n-1)}.\]
Hence,
\begin{align*}
\int_{\Gamma}\frac{1}{|x-y|^{n-2}}dS(y) &\leq \sum_{i=1}^{K}\int_{\Gamma\cap A_{i}}\frac{1}{|x-y|^{n-2}}dS(y)\\
&\leq C\sum_{i=1}^{K}|\Gamma\cap A_{i}|^{1/(n-1)}\\
&\leq CK|\Gamma|^{1/(n-1)}=C|\Gamma|^{1/(n-1)}.
\end{align*}
\end{proof}

Lemma \ref{Lemma, bdry int higher dim} and Corollary \ref{Cor, bound for int on gamma, high dim} will be applied to show the desired Lemma \ref{Lemma, bdry-time int bdd, critical, n large} which pushes the power $\alpha$ in (\ref{bdry-time int bdd}) to $\frac{1}{n-1}$ when $n\geq 3$.

\begin{lemma}\label{Lemma, bdry-time int bdd, critical, n large}
Let $\O$ be a bounded open subset of $\m{R}^{n}(n\geq 3)$ with $C^{1}$ boundary. Let $\Gamma$ be any subset of $\p\O$.  Then there exists $C=C(n,\O)$ such that for any $x\in\m{R}^{n}$ and $t\geq 0$, \be\label{bdry-time int bdd, critical, n large}
\int_{0}^{t}\int_{\Gamma}\Phi(x-y,t-\tau)\,dS(y)\,d\tau\leq C|\Gamma|^{1/(n-1)}.\ee
\end{lemma}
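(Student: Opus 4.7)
The plan is to reduce the space-time integral to a purely spatial integral of the Newtonian kernel $|x-y|^{-(n-2)}$ on $\Gamma$, then apply Corollary \ref{Cor, bound for int on gamma, high dim}. The key observation is that since $n\geq 3$, the time integral of the heat kernel from $0$ to $\infty$ converges and produces precisely the Newtonian potential.

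First I would use Fubini's theorem to swap the order of integration, and then enlarge the time interval from $[0,t]$ to $[0,\infty)$ (which is legal because $\Phi\geq 0$). This yields
\[
\int_{0}^{t}\int_{\Gamma}\Phi(x-y,t-\tau)\,dS(y)\,d\tau
= \int_{\Gamma}\int_{0}^{t}\Phi(x-y,s)\,ds\,dS(y)
\leq \int_{\Gamma}\int_{0}^{\infty}\Phi(x-y,s)\,ds\,dS(y).
\]
Next I would evaluate the inner integral explicitly via the substitution $r=|x-y|^{2}/(4s)$. Using the definition (\ref{fund soln of heat eq}) of $\Phi$ and the gamma function, one obtains, for $n\geq 3$,
\[
\int_{0}^{\infty}\Phi(x-y,s)\,ds = \frac{\Gamma\!\left(\frac{n}{2}-1\right)}{4\pi^{n/2}}\,\frac{1}{|x-y|^{n-2}},
\]
which is just the standard fact that the time integral of the heat kernel is (a constant times) the Newtonian kernel; this is the step that requires $n\geq 3$ so that $\Gamma(n/2-1)$ is finite (equivalently, so that $s^{n/2-2}$ is integrable near $s=\infty$ after substitution).

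Finally I would apply Corollary \ref{Cor, bound for int on gamma, high dim} directly to the resulting spatial integral:
\[
\int_{\Gamma}\frac{1}{|x-y|^{n-2}}\,dS(y)\leq C\,|\Gamma|^{1/(n-1)},
\]
for a constant $C=C(n,\Omega)$, absorbing the numerical constant from the time integration. This yields (\ref{bdry-time int bdd, critical, n large}).

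I do not anticipate a serious obstacle here: the only subtlety is the explicit time integration, which fails exactly in the critical dimension $n=2$ because the Newtonian kernel becomes logarithmic and $\int_{0}^{\infty}\Phi(x-y,s)\,ds$ diverges, confirming why a separate argument (Lemma \ref{Lemma, bdry-time int bdd, critical, n=2}) with an extra $\log$ factor and a restriction on $t$ is needed in two dimensions.
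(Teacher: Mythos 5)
Your proof is correct and follows essentially the same route as the paper: both reduce the space-time integral to $\int_\Gamma |x-y|^{-(n-2)}\,dS(y)$ by integrating out time (the paper keeps the incomplete gamma integral and bounds it by $\Gamma(n/2-1)$, whereas you extend the upper limit to infinity first and evaluate exactly — these are the same estimate), and both then conclude via Corollary \ref{Cor, bound for int on gamma, high dim}.
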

\begin{proof}
In this proof, unless otherwise stated, $C$ represents constants which only depend on $n$ and $\O$. First, by the explicit formula (\ref{fund soln of heat eq}) of $\Phi$ and a change of variable in $\tau$, we have 
\[\int_{0}^{t}\int_{\Gamma}\Phi(x-y,t-\tau)\,dS(y)\,d\tau=C\int_{\Gamma}\int_{0}^{t}\tau^{-n/2}e^{-|x-y|^{2}/(4\tau)}\,d\tau\,dS(y).\]
Then by the change of variable $s=|x-y|^{2}/(4\tau)$ for $\tau$, 
\be\label{simple form of int}\begin{split}
&\int_{\Gamma}\int_{0}^{t}\tau^{-n/2}e^{-|x-y|^{2}/(4\tau)}\,d\tau\,dS(y) \\
\leq\;\; & C\int_{\Gamma}\frac{1}{|x-y|^{n-2}}\int_{|x-y|^{2}/(4t)}^{\infty}s^{\frac{n}{2}-2}e^{-s}\,ds\,dS(y).
\end{split}\ee
Since $n\geq 3$, $s^{\frac{n}{2}-2}e^{-s}$ is integrable on $(0,\infty)$. As a result, 
\begin{eqnarray*}
&&\int_{\Gamma}\frac{1}{|x-y|^{n-2}}\int_{|x-y|^{2}/(4t)}^{\infty}s^{\frac{n}{2}-2}e^{-s}\,ds\,dS(y) \\
&\leq &\int_{\Gamma}\frac{1}{|x-y|^{n-2}}\int_{0}^{\infty}s^{\frac{n}{2}-2}e^{-s}\,ds\,dS(y)\\
&=& C\int_{\Gamma}\frac{1}{|x-y|^{n-2}}\,dS(y).
\end{eqnarray*}
Now applying Corollary \ref{Cor, bound for int on gamma, high dim}, 
\[\int_{\Gamma}\frac{1}{|x-y|^{n-2}}\,dS(y)\leq C|\Gamma|^{1/(n-1)}.\]
\end{proof}

The following Lemma \ref{Lemma, bdry int 2 dim}, Corollary \ref{Cor, bound for int on gamma, dim 2} and Lemma \ref{Lemma, bdry-time int bdd, critical, n=2} are parallel results as Lemma \ref{Lemma, bdry int higher dim}, Corollary \ref{Cor, bound for int on gamma, high dim} and Lemma \ref{Lemma, bdry-time int bdd, critical, n large}, but they deal with dimension $n=2$ rather than $n\geq 3$.

\begin{lemma}\label{Lemma, bdry int 2 dim}
Let $\O$ be a bounded, open subset of $\m{R}^{2}$ with $C^{1}$ boundary. Let $\Gamma$ be any subset of $\p\O$ that is given by a graph as in Definition \ref{Def, bdry given by graph}. Then there exists a constant $C=C(\O, ||\nabla \phi||_{L^{\infty}(U)})$, where $\phi$ and $U$ are the same as those in Definition \ref{Def, bdry given by graph}, such that for any $x\in\ol{\O}$,
\[\int_{\Gamma}\ln\Big(\frac{d_{\O}}{|x-y|}\Big)\,dS(y)\leq C|\Gamma|\ln\Big(\frac{1}{|\Gamma|}+1\Big),\]
where $d_{\O}$ denotes the diameter of $\O$.
\end{lemma}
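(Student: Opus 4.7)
The plan is to mirror the proof of Lemma \ref{Lemma, bdry int higher dim}, replacing the power kernel $1/r^{n-2}$ by the logarithmic kernel $\ln(d_\O/r)$. Three ingredients: (i) reduce to a one-dimensional integral via the graph parametrization of $\Gamma$; (ii) apply Lemma \ref{Lemma, mass ineq} to pass to a symmetric interval; (iii) evaluate the resulting 1D integral explicitly and match it to the target form $|\Gamma|\ln(1/|\Gamma|+1)$.

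Writing $\Gamma = \{(\tilde y, \phi(\tilde y)) : \tilde y \in U\}$ with $U\subset \m{R}$, one has $dS(y) = \sqrt{1+|\phi'|^2}\,d\tilde y \leq C\,d\tilde y$, and since $x, y \in \ol\O$ also $|\tilde x - \tilde y| \leq |x-y| \leq d_\O$. Monotonicity of the logarithm then yields $0 \leq \ln(d_\O/|x-y|) \leq \ln(d_\O/|\tilde x - \tilde y|)$, so that
\[\int_\Gamma \ln\frac{d_\O}{|x-y|}\, dS(y) \leq C\int_U \ln\frac{d_\O}{|\tilde x - \tilde y|}\, d\tilde y.\]
I would then set $f(r) = \max\{\ln(d_\O/r), 0\}$, which is decreasing from $(0,\infty)$ to $[0,\infty)$, so Lemma \ref{Lemma, mass ineq} in dimension one gives
\[\int_U f(|\tilde x - \tilde y|)\, d\tilde y \leq \int_{-R}^R f(|z|)\, dz, \qquad R := |U|/2.\]
Because $\Gamma \subseteq \ol\O$ has diameter at most $d_\O$, the projection $U$ also has diameter at most $d_\O$, forcing $R \leq d_\O/2$; consequently the right-hand integral is precisely $2R(1+\ln(d_\O/R))$, via the primitive $z\ln(d_\O/z)+z$.

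The remaining step is to translate $R$ back into $|\Gamma|$. From $d\tilde y \leq dS(y) \leq \sqrt{1+\|\phi'\|_\infty^2}\,d\tilde y$, one sees $|U|$ is comparable to $|\Gamma|$ (ratio depending only on $\|\phi'\|_\infty$), so $R$ is comparable to $|\Gamma|$, and substitution yields a bound of the form $C|\Gamma|\bigl(C_1 + \ln(1/|\Gamma|)\bigr)$ with $C_1 = C_1(\O, \|\phi'\|_\infty)$. A short elementary comparison with $\ln(1/|\Gamma|+1) = \ln(1+|\Gamma|) - \ln|\Gamma|$ --- which is asymptotic to $-\ln|\Gamma|$ as $|\Gamma| \to 0^+$ and remains bounded away from zero on the compact range $|\Gamma| \in (0, |\p\O|]$ --- produces the stated estimate. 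The only genuine point of care, specific to $n=2$, is the sign change of $\ln(d_\O/r)$ at $r=d_\O$, which forces the truncation in $f$; the diameter bound $R \leq d_\O/2$ together with the pointwise inequality $|\tilde x - \tilde y| \leq d_\O$ makes this truncation harmless and eliminates any need for case splitting.
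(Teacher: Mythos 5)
Your proposal is correct and follows essentially the same route as the paper: graph parametrization, reduction to the one-dimensional kernel $\ln(d_\O/|\tilde x - \tilde y|)$, the rearrangement Lemma \ref{Lemma, mass ineq} in dimension one, the bound $R\leq d_\O/2$, the explicit primitive, and a final comparison with $|\Gamma|\ln(1/|\Gamma|+1)$. The only cosmetic difference is that the paper makes the final comparison explicit by splitting into $|\Gamma|\leq d_\O$ and $|\Gamma|>d_\O$ (using that $r\mapsto r[\ln(d_\O/r)+1]$ increases on $(0,d_\O]$ and that $r\mapsto r\ln(1/r+1)$ increases), whereas you fold that case distinction into the remark about $\ln(1/|\Gamma|+1)$ staying bounded away from zero on the compact range; spelling out that split would tighten the last step, since the substitution $R\leadsto |\Gamma|/2$ into the increasing function is only licit when $|\Gamma|\leq d_\O$.
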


\begin{proof}
By Definition \ref{Def, bdry given by graph}, without loss of generality, we can assume there exists a $C^{1}$ function $\phi:\m{R}\rightarrow\m{R}$ and a bounded set $U\subseteq\m{R}$ such that 
\be\label{para gamma, dim 2}\Gamma=\{(\t{y}, \phi(\t{y})):\t{y}\in U\}.\ee
In addition, we define 
\be\label{def of f, dim 2}
f(r)=\left\{\begin{array}{ll}
\ln\big(\frac{d_{\O}}{r}\big), & 0<r\leq d_{\O},\vspace{0.05in}\\
0, & r>d_{\O}. 
\end{array}\right.\ee

Since $x=(\t{x},x_{n})\in\ol{\O}$, then for any $(\t{y}, \phi(\t{y}))\in\Gamma$,
\[|\t{x}-\t{y}|\leq |(\t{x},x_{n})-(\t{y}, \phi(\t{y}))|\leq d_{\O}.\]
As a result, 
\begin{align}\label{reduce to f}
\int_{\Gamma}\ln\Big(\frac{d_{\O}}{|x-y|}\Big)\,dS(y) &= \int_{U}\ln\Big(\frac{d_{\O}}{|(\t{x},x_{n)}-(\t{y},\phi(\t{y}))|}\Big)\sqrt{1+|\nabla \phi(\t{y})|^{2}}\,d\t{y} \notag\\
&\leq C\int_{U}\ln\Big(\frac{d_{\O}}{|\t{x}-\t{y}|}\Big)\,d\t{y} \notag\\
&=C\int_{U}f(|\t{x}-\t{y}|)\,d\t{y}.
\end{align}
Now it follows from Lemma \ref{Lemma, mass ineq} that 
\begin{align}\label{reduce to polar form}
\int_{U}f(|\t{x}-\t{y}|)\,d\t{y} &\leq \int_{B_{R}(0)}f(|\t{z}|)\,d\t{z} \notag\\
&=2\int_{0}^{R}f(r)\,dr,
\end{align}
where $|B_{R}(0))|=|U|$, namely $2R=|U|$. For any $\t{y}_{1}, \t{y}_{2}\in U$, we have
\[|\t{y}_{1}-\t{y}_{2}|\leq |(\t{y}_{1},\phi(\t{y}_{1}))-(\t{y}_{2},\phi(\t{y}_{2}))|\leq d_{\O},\]
which implies $\text{diam}(U)\leq d_{\O}$. Moreover, since $U\subseteq\m{R}$, then $|U|\leq \text{diam}(U)$. Thus, $R=|U|/2\leq d_{\O}/2$. So it follows from (\ref{def of f, dim 2}) that 
\begin{align}\label{int of f by R}
\int_{0}^{R}f(r)\,dr &= \int_{0}^{R}\ln\Big(\frac{d_{\O}}{r}\Big)\,dr \notag\\
&=R \Big[\ln\Big(\frac{d_{\O}}{R}\Big)+1\Big].
\end{align}

Again by the parametrization (\ref{para gamma, dim 2}), it is readily seen that $|U|\leq |\Gamma|$. Therefore, $$R\leq \min\Big\{\frac{|\Gamma|}{2}, \frac{d_{\O}}{2}\Big\}.$$
Define 
\[g(r)=r\Big[\ln\Big(\frac{d_{\O}}{r}\Big)+1\Big], \quad\forall\, r>0.\]
Then $g$ is increasing when $r\in(0,d_{\O}]$ and (\ref{int of f by R}) implies $\int_{0}^{R}f(r)\,dr=g(R)$. Next, we will estimate $g(R)$ in the following two situations.
\begin{itemize}
\item $|\Gamma|\leq d_{\O}$. 
\begin{align}
g(R) \leq g(|\Gamma|)&=|\Gamma|\Big[\ln\Big(\frac{d_{\O}}{|\Gamma|}\Big)+1\Big] \notag\\
&\leq C|\Gamma|\ln\Big(\frac{1}{|\Gamma|}+1\Big) \label{est for small Gamma}
\end{align}
for some constant $C$ only depending on $\O$.

\item $|\Gamma|> d_{\O}$. 
\begin{align*}
g(R) &\leq g(d_{\O})=d_{\O}.
\end{align*}
Define 
\be\label{def of h}
h(r)=r\ln\Big(\frac{1}{r}+1\Big), \quad\forall\, r>0.\ee
Then \[h''(r)=-\frac{1}{r(1+r)^2}<0, \quad\forall\, r>0.\]
This implies $h'(r)>0$ for any $r>0$, since $\lim\limits_{r\rightarrow\infty}h'(r)=0$. Hence, $h$ is an increasing function and 
\[|\Gamma|\ln\Big(\frac{1}{|\Gamma|}+1\Big) =h(|\Gamma|)\geq h(d_{\O})=d_{\O}\ln\Big(\frac{1}{d_{\O}}+1\Big).\]
Thus,  
\be\label{est for large Gamma}g(R)\leq C|\Gamma|\ln\Big(\frac{1}{|\Gamma|}+1\Big),\ee
where $C=1/\ln\big(\frac{1}{d_{\O}}+1\big)$ is a constant depending only on $\O$.
\end{itemize}

Combining (\ref{reduce to f}), (\ref{reduce to polar form}), (\ref{est for small Gamma}) and (\ref{est for large Gamma}), the conclusion follows.
\end{proof}

\begin{corollary}\label{Cor, bound for int on gamma, dim 2}
Let $\O$ be a bounded, open subset of $\m{R}^{2}$ with $C^{1}$ boundary. Let $\Gamma$ be any subset of $\p\O$. Then there exists a constant $C=C(\O)$ such that for any $x\in\ol{\O}$,
\[\int_{\Gamma}\ln\Big(\frac{d_{\O}}{|x-y|}\Big)\,dS(y)\leq C|\Gamma|\ln\Big(\frac{1}{|\Gamma|}+1\Big),\]
where $d_{\O}$ denotes the diameter of $\O$.
\end{corollary}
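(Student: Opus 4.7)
The plan is to mimic the derivation of Corollary \ref{Cor, bound for int on gamma, high dim} in the $n=2$ setting, reducing the general case to the local graph case handled in Lemma \ref{Lemma, bdry int 2 dim}. Since $\partial\Omega$ is compact and $C^{1}$, I would first cover $\partial\Omega$ by finitely many coordinate patches $A_{1},\dots,A_{K}$, where each $A_{i}$ is the graph of some $C^{1}$ function $\phi_{i}$ over a bounded set $U_{i}\subseteq\mathbb{R}$, with $K=K(\Omega)$ and each $\|\nabla\phi_{i}\|_{L^{\infty}(U_{i})}$ depending only on $\Omega$. This is exactly the decomposition $\partial\Omega=\bigcup_{i=1}^{K}A_{i}$ used in (\ref{decomp of bdry}).

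For each $i$, the set $\Gamma\cap A_{i}$ is a boundary piece given by a graph in the sense of Definition \ref{Def, bdry given by graph}, so Lemma \ref{Lemma, bdry int 2 dim} applies and yields, for every $x\in\overline{\Omega}$,
\[
\int_{\Gamma\cap A_{i}}\ln\Big(\frac{d_{\Omega}}{|x-y|}\Big)\,dS(y)\;\leq\; C_{i}\,|\Gamma\cap A_{i}|\,\ln\Big(\frac{1}{|\Gamma\cap A_{i}|}+1\Big),
\]
with $C_{i}=C_{i}(\Omega)$. Note that the integrand is nonnegative because $|x-y|\le d_{\Omega}$ whenever $x\in\overline{\Omega}$ and $y\in\partial\Omega$, so the decomposition splits the integral cleanly: $\int_{\Gamma}\le\sum_{i=1}^{K}\int_{\Gamma\cap A_{i}}$.

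The only nontrivial step is summing these bounds without paying a penalty that depends on $\Gamma$. Here I would reuse the function $h(r)=r\ln(1/r+1)$ already analyzed at the end of the proof of Lemma \ref{Lemma, bdry int 2 dim}, where it was shown (via $h''<0$ and $\lim_{r\to\infty}h'(r)=0$) that $h$ is increasing on $(0,\infty)$. Since $|\Gamma\cap A_{i}|\leq|\Gamma|$ for every $i$, monotonicity of $h$ gives $h(|\Gamma\cap A_{i}|)\leq h(|\Gamma|)$, and therefore
\[
\sum_{i=1}^{K} h\bigl(|\Gamma\cap A_{i}|\bigr)\;\leq\; K\,h(|\Gamma|)\;=\;K\,|\Gamma|\,\ln\Big(\frac{1}{|\Gamma|}+1\Big).
\]
Combining everything and absorbing $K$ together with $\max_{i}C_{i}$ into a single constant $C=C(\Omega)$ yields the claimed estimate.

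The main (modest) obstacle I anticipate is ensuring the monotonicity step really does not cost an extra factor depending on $|\Gamma|$ (for instance, one might worry that $h$ being concave with $h(0)=0$ only gives subadditivity in the wrong direction). The saving point is that $h$ is nonetheless monotone increasing, so bounding each term by $h(|\Gamma|)$ and paying the finite combinatorial factor $K=K(\Omega)$ is harmless. Apart from this, the argument is a direct parallel of Corollary \ref{Cor, bound for int on gamma, high dim}.
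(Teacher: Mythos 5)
Your proposal is correct and follows essentially the same route as the paper: decompose $\partial\Omega$ into finitely many graph patches as in (\ref{decomp of bdry}), apply Lemma \ref{Lemma, bdry int 2 dim} on each piece $\Gamma\cap A_i$, and use the monotonicity of $h(r)=r\ln(1/r+1)$ established in that lemma's proof to bound each term by $h(|\Gamma|)$ before absorbing the finite factor $K$ into the constant. Your added remark that the integrand is nonnegative (since $|x-y|\le d_\Omega$) is a correct and worthwhile justification for the term-by-term splitting, though the paper leaves it implicit.
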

\begin{proof}
Similar to the proof of Corollary \ref{Cor, bound for int on gamma, high dim}, we first decompose $\p\O$ as that in (\ref{decomp of bdry}). Then 
\begin{align*}
\int_{\Gamma}\ln\Big(\frac{d_{\O}}{|x-y|}\Big)\,dS(y) &\leq \sum_{i=1}^{K}\int_{\Gamma\cap A_{i}}\ln\Big(\frac{d_{\O}}{|x-y|}\Big)\,dS(y).
\end{align*}
Since each $\Gamma\cap A_{i}$ is given by a graph, we can apply Lemma \ref{Lemma, bdry int 2 dim} to conclude there exists a constant $C=C(\O)$ such that for each $1\leq i\leq K$,
\[\int_{\Gamma\cap A_{i}}\ln\Big(\frac{d_{\O}}{|x-y|}\Big)\,dS(y)\leq C|\Gamma\cap A_{i}|\ln\Big(\frac{1}{|\Gamma\cap A_{i}|}+1\Big).\]
Recalling the function $h$ defined in (\ref{def of h}) is an increasing function, so
\[|\Gamma\cap A_{i}|\ln\Big(\frac{1}{|\Gamma\cap A_{i}|}+1\Big)\leq |\Gamma|\ln\Big(\frac{1}{|\Gamma|}+1\Big).\]
As a result,
\[\int_{\Gamma}\ln\Big(\frac{d_{\O}}{|x-y|}\Big)\,dS(y)\leq C|\Gamma|\ln\Big(\frac{1}{|\Gamma|}+1\Big).\]
\end{proof}

Next, Lemma \ref{Lemma, bdry int 2 dim} and Corollary \ref{Cor, bound for int on gamma, dim 2} will be applied to show our desired Lemma \ref{Lemma, bdry-time int bdd, critical, n=2} which is an improvement of (\ref{bdry-time int bdd}) when $n=2$.

\begin{lemma}\label{Lemma, bdry-time int bdd, critical, n=2}
Let $\O$ be a bounded, open subset of $\m{R}^{2}$ with $C^{1}$ boundary. Let $\Gamma$ be any subset of $\p\O$. Then there exists $C=C(\O)$ such that for any $x\in\ol{\O}$ and $t\in[0,1]$, 
\be\label{bdry-time int bdd, critical, n=2}
\int_{0}^{t}\int_{\Gamma}\Phi(x-y,t-\tau)\,dS(y)\,d\tau\leq C|\Gamma|\ln\Big(\frac{1}{|\Gamma|}+1\Big).\ee
\end{lemma}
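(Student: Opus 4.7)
The plan is to follow the same three-step strategy as in the proof of Lemma \ref{Lemma, bdry-time int bdd, critical, n large}: (i) carry out the $\tau$-integral by an explicit substitution, (ii) bound the resulting one-dimensional exponential integral in terms of $|x-y|$, and (iii) feed that pointwise bound into the surface integral and apply Corollary \ref{Cor, bound for int on gamma, dim 2}.

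First, by the formula (\ref{fund soln of heat eq}) with $n=2$ and the change of variable $\tau\mapsto t-\tau$, we rewrite
\[
\int_{0}^{t}\!\int_{\Gamma}\Phi(x-y,t-\tau)\,dS(y)\,d\tau
= \frac{1}{4\pi}\int_{\Gamma}\int_{0}^{t}\frac{1}{\tau}\exp\Big(-\frac{|x-y|^{2}}{4\tau}\Big)d\tau\,dS(y).
\]
The substitution $s=|x-y|^{2}/(4\tau)$ turns the inner integral into $\int_{|x-y|^{2}/(4t)}^{\infty}\frac{e^{-s}}{s}\,ds$, which is the exponential integral. In the higher-dimensional proof the integrand $s^{n/2-2}e^{-s}$ was integrable at $0$ and produced an $|x-y|^{-(n-2)}$ kernel; here the $s^{-1}$ singularity is only logarithmic, which is exactly the reason a $\ln$-factor appears and why we restrict $t\in[0,1]$.

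Next, I would estimate the exponential integral. Splitting according to whether $a:=|x-y|^{2}/(4t)$ is $\geq 1$ or $<1$ gives
\[
\int_{a}^{\infty}\frac{e^{-s}}{s}\,ds\leq
\begin{cases}\displaystyle\int_{1}^{\infty}e^{-s}\,ds\leq 1, & a\geq 1,\\[4pt]
\displaystyle\int_{a}^{1}\frac{ds}{s}+\int_{1}^{\infty}\frac{e^{-s}}{s}\,ds\leq \ln\!\Big(\frac{4t}{|x-y|^{2}}\Big)+1, & a<1.\end{cases}
\]
Since $t\leq 1$, in the second case $\ln\bigl(4t/|x-y|^{2}\bigr)\leq 2\ln\bigl(2/|x-y|\bigr)\leq C(\O)\bigl(1+\ln(d_{\O}/|x-y|)\bigr)$ whenever $|x-y|\leq d_{\O}$, which is automatic because $x,y\in\ol{\O}$. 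Therefore, in all cases,
\[
\int_{0}^{t}\frac{1}{\tau}\exp\Big(-\frac{|x-y|^{2}}{4\tau}\Big)d\tau
\;\leq\; C(\O)\Big(1+\ln\Big(\frac{d_{\O}}{|x-y|}\Big)\Big).
\]

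Finally, I would integrate this pointwise bound in $y$ over $\Gamma$. The constant term contributes $C|\Gamma|$, and the logarithmic term is handled exactly by Corollary \ref{Cor, bound for int on gamma, dim 2}, giving $C|\Gamma|\ln(1/|\Gamma|+1)$. Since $|\Gamma|\leq|\p\O|$, we have $\ln(1/|\Gamma|+1)\geq \ln(1/|\p\O|+1)>0$, so the constant term $C|\Gamma|$ is absorbed into $C|\Gamma|\ln(1/|\Gamma|+1)$, completing the proof with a constant depending only on $\O$. The main (mild) technical point is the careful splitting of the exponential integral and the use of the restriction $t\leq 1$ to trade the arbitrary $t$ for the universal length scale $d_{\O}$; everything else is a clean parallel to the $n\geq 3$ argument.
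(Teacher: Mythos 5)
Your proposal is correct and follows essentially the same route as the paper: the same substitution reduces matters to the exponential integral $\int_{|x-y|^2/(4t)}^\infty s^{-1}e^{-s}\,ds$, the restriction $t\le 1$ converts it into a bound of the form $C\big(1+\ln(d_\O/|x-y|)\big)$, Corollary~\ref{Cor, bound for int on gamma, dim 2} handles the logarithmic surface integral, and the residual $C|\Gamma|$ term is absorbed using $|\Gamma|\le|\p\O|$. The only superficial difference is the splitting point in the exponential integral (you split at $s=1$, the paper at $s=d_\O^2$), which changes nothing of substance.
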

\begin{proof}
We proceed similarly as that in the proof of Lemma \ref{Lemma, bdry-time int bdd, critical, n large} until (\ref{simple form of int}). Next, the situation is different since $s^{n/2-2}e^{-s}$ is not integrable near $s=0$ when $n=2$. For convenience, we rewrite (\ref{simple form of int}) when $n=2$ as following:
\be\label{simple form of int, n=2}
\int_{\Gamma}\int_{0}^{t}\tau^{-1}e^{-|x-y|^{2}/(4\tau)}\,d\tau\,dS(y)\leq C\int_{\Gamma}\int_{|x-y|^{2}/(4t)}^{\infty}s^{-1}e^{-s}\,ds\,dS(y).\ee
Since $t\leq 1$ and $x\in\ol{\O}$, $|x-y|^{2}/(4t)\geq |x-y|^{2}/4$. Thus,
\begin{align*}
\int_{|x-y|^{2}/(4t)}^{\infty}s^{-1}e^{-s}\,ds &\leq \int_{|x-y|^{2}/4}^{\infty}s^{-1}e^{-s}\,ds\\
&=\int_{|x-y|^{2}/4}^{d_{\O}^{2}}s^{-1}e^{-s}\,ds+\int_{d_{\O}^{2}}^{\infty}s^{-1}e^{-s}\,ds\\
&\leq \int_{|x-y|^{2}/4}^{d_{\O}^{2}}s^{-1}\,ds+ \frac{1}{d_{\O}^{2}}\int_{d_{\O}^{2}}^{\infty}e^{-s}\,ds\\
&=2\ln\Big(\frac{d_{\O}}{|x-y|}\Big)+C.
\end{align*}
As a result,
\be\label{two terms, n=2}\int_{\Gamma}\int_{|x-y|^{2}/(4t)}^{\infty}s^{-1}e^{-s}\,ds\,dS(y)\leq 2\int_{\Gamma}\ln\Big(\frac{d_{\O}}{|x-y|}\Big)\,dS(y)+C|\Gamma|.\ee
Now applying Corollary \ref{Cor, bound for int on gamma, dim 2}, 
\[\int_{\Gamma}\ln\Big(\frac{d_{\O}}{|x-y|}\Big)\,dS(y)\leq C|\Gamma|\ln\Big(\frac{1}{|\Gamma|}+1\Big).\]
Finally noticing that 
\begin{align*}
|\Gamma| &\leq \frac{1}{\ln\Big(\frac{1}{|\p\O|}+1\Big)}|\Gamma|\ln\Big(\frac{1}{|\Gamma|}+1\Big)\\
&=C|\Gamma|\ln\Big(\frac{1}{|\Gamma|}+1\Big),
\end{align*}
the lemma is proved.
\end{proof}

\section{Proof of Theorem \ref{Thm, lower bdd, general domain}}
\label{Sec, proof for general}
The starting point of the proofs in this paper is the representation formula of the solution $u$ (see Lemma A.1 in \cite{YZ1611}): for any $T\in [0,T^{*})$ and $(x,t)\in\p\O\times[0,T^{*}-T)$,
\begin{eqnarray}
u(x,T+t) &=& 2\int_{\O}\Phi(x-y,t)\,u(y,T)\,dy \notag\\
&& -2\int_{0}^{t}\int_{\p\O}\frac{\p\Phi(x-y,t-\tau)}{\p n(y)}\,u(y,T+\tau)\,dS(y)\,d\tau \notag\\
&& +2\int_{0}^{t}\int_{\Gamma_1}\Phi(x-y,t-\tau)\,u^{q}(y,T+\tau)\,dS(y)\,d\tau.  \label{rep formula from any time}
\end{eqnarray}
To estimate the integral of $\frac{\p\Phi(x-y,t-\tau)}{\p n(y)}$ on $\p\O\times[0,t]$, we apply the lemma below.

\begin{lemma}\label{Lemma, bdry-time int of heat kernel normal deri}
There exists $C=C(n,\O)$ such that for any $x\in\p\O$ and $t>0$,
\be\label{bdry-time int of heat kernel normal deri}
\int_{0}^{t}\int_{\p\O}\bigg|\frac{\p\Phi(x-y,t-\tau)}{\p n(y)}\bigg|\,dS(y)\,d\tau\leq C\sqrt{t}.\ee
\end{lemma}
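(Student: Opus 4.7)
The plan is to reduce the integral to a time integral of a surface Gaussian, using that $\partial\Omega$ being $C^{2}$ makes the factor $(x-y)\cdot\vec{n}(y)$ quadratically small, which kills the apparent singularity as $\tau\to t$.

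First I would compute the normal derivative explicitly. Writing $s=t-\tau$ and differentiating $\Phi(x-y,s)=(4\pi s)^{-n/2}\exp(-|x-y|^{2}/(4s))$ in $y$, one gets
\[
\frac{\partial \Phi(x-y,s)}{\partial n(y)} = \frac{(x-y)\cdot\vec{n}(y)}{2s}\,\Phi(x-y,s).
\]
Since $\partial\Omega$ is $C^{2}$ and both $x,y\in\partial\Omega$, a standard consequence of the $C^{2}$ parametrization of the boundary is the quadratic bound $|(x-y)\cdot\vec{n}(y)|\leq C|x-y|^{2}$ with $C=C(\Omega)$. Combining this with the elementary inequality $r^{2}e^{-r^{2}/(4s)}\leq C s\,e^{-r^{2}/(8s)}$ (which just absorbs a polynomial factor into the exponential at the cost of a worse constant) yields the pointwise estimate
\[
\Bigl|\frac{\partial \Phi(x-y,s)}{\partial n(y)}\Bigr| \leq C\,s^{-n/2}\exp\!\Bigl(-\tfrac{|x-y|^{2}}{8s}\Bigr).
\]

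Second, I would estimate the surface integral of this Gaussian. The idea is that $\partial\Omega$ is an $(n-1)$-dimensional $C^{2}$ manifold, so after covering it by finitely many local graph patches (as in the proof of Corollary~\ref{Cor, bound for int on gamma, high dim}), a change of variables to tangential coordinates $\tilde{y}\in\mathbb{R}^{n-1}$ around the base point of $x$, together with $|x-y|\geq c|\tilde{x}-\tilde{y}|$, reduces
\[
\int_{\partial\Omega} s^{-n/2}\exp\!\Bigl(-\tfrac{|x-y|^{2}}{8s}\Bigr)dS(y) \leq C\int_{\mathbb{R}^{n-1}} s^{-n/2}\exp\!\Bigl(-\tfrac{|\tilde{z}|^{2}}{C s}\Bigr)d\tilde{z} = C\,s^{-1/2},
\]
by the standard Gaussian computation in $n-1$ variables. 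For the patches whose base points are at distance $\gtrsim 1$ from $x$, the integrand is uniformly bounded (with exponential decay in $1/s$), so they contribute no more than $Cs^{-1/2}$ either.

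Finally I would integrate in time:
\[
\int_{0}^{t}\int_{\partial\Omega}\Bigl|\frac{\partial \Phi(x-y,t-\tau)}{\partial n(y)}\Bigr|\,dS(y)\,d\tau \leq C\int_{0}^{t}(t-\tau)^{-1/2}\,d\tau = 2C\sqrt{t},
\]
which is the desired bound. The only real obstacle is step two: making sure the Gaussian on the curved surface is controlled uniformly in $x\in\partial\Omega$. This is handled by the finite graph-patch decomposition available for any $C^{2}$ bounded domain, with constants depending only on $n$ and $\Omega$, which is exactly what the statement allows.
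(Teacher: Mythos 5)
Your proposal is correct, and its first half coincides with the paper's proof: the same explicit formula for the normal derivative, the same quadratic bound $|(x-y)\cdot\ora{n}(y)|\leq C|x-y|^{2}$ from the $C^{2}$ boundary, and the same absorption of the polynomial factor into the exponential, yielding the pointwise bound $C(t-\tau)^{-n/2}\exp\big(-|x-y|^{2}/(8(t-\tau))\big)$. Where you diverge is the final step: the paper rescales time so that this kernel becomes a constant multiple of $\Phi(x-y,\sigma)$ and then simply invokes the known boundary-time estimate (\ref{bdry-time int bdd}) with $\Gamma=\p\O$ and $\alpha=0$, whereas you re-derive the needed decay by hand, covering $\p\O$ by finitely many $C^{2}$ graph patches, using $|x-y|\geq|\t{x}-\t{y}|$ and a flat $(n-1)$-dimensional Gaussian integral to get the surface bound $Cs^{-1/2}$, and then integrating $\int_{0}^{t}(t-\tau)^{-1/2}\,d\tau=2\sqrt{t}$. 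Your route is self-contained (it essentially reproves the $\alpha=0$ case of (\ref{bdry-time int bdd})) at the cost of a bit more bookkeeping with the patch decomposition; the paper's route is shorter because it leans on the cited lemma from \cite{YZ1611}. Both yield the same constant structure $C=C(n,\O)$, so there is no gap.
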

\begin{proof}
By the definition of $\Phi$, 
\[\bigg|\frac{\p\Phi(x-y,t-\tau)}{\p n(y)}\bigg|=\frac{C|(x-y)\cdot \ora{n}(y)|}{(t-\tau)^{\frac{n}{2}+1}}\exp\Big(-\frac{|x-y|^{2}}{4(t-\tau)}\Big).\]
Since $\p\O$ is assumed to be $C^{2}$, there exists a constant $C$ such that $|(x-y)\cdot \ora{n}(y)|\leq C|x-y|^{2}$ for any $x,y\in \p\O$. As a result, 
\[\bigg|\frac{\p\Phi(x-y,t-\tau)}{\p n(y)}\bigg|\leq \frac{C|x-y|^{2}}{(t-\tau)^{\frac{n}{2}+1}}\exp\Big(-\frac{|x-y|^{2}}{4(t-\tau)}\Big).\]
Noticing the term
\[\frac{|x-y|^{2}}{t-\tau}\exp\Big(-\frac{|x-y|^{2}}{8(t-\tau)}\Big)\]
is bounded by some constant, so 
\[\bigg|\frac{\p\Phi(x-y,t-\tau)}{\p n(y)}\bigg|\leq \frac{C}{(t-\tau)^{n/2}}\exp\Big(-\frac{|x-y|^{2}}{8(t-\tau)}\Big).\]
Thus,
\begin{eqnarray*}
&&\int_{0}^{t}\int_{\p\O}\bigg|\frac{\p\Phi(x-y,t-\tau)}{\p n(y)}\bigg|\,dS(y)\,d\tau \\
&\leq & C\int_{0}^{t}\int_{\p\O}\frac{1}{(t-\tau)^{n/2}}\exp\Big(-\frac{|x-y|^{2}}{8(t-\tau)}\Big)\,dS(y)\,d\tau\\
&=& C\int_{0}^{t}\int_{\p\O}\frac{1}{\tau^{n/2}}\exp\Big(-\frac{|x-y|^{2}}{8\tau}\Big)\,dS(y)\,d\tau.
\end{eqnarray*}

By the change of variable $\sigma=2\tau$,
\begin{eqnarray*}
&&\int_{0}^{t}\int_{\p\O}\frac{1}{\tau^{n/2}}\exp\Big(-\frac{|x-y|^{2}}{8\tau}\Big)\,dS(y)\,d\tau \\
&=& C \int_{0}^{2t}\int_{\p\O}\frac{1}{\sigma^{n/2}}\exp\Big(-\frac{|x-y|^{2}}{4\sigma}\Big)\,dS(y)\,d\sigma\\
&=& C \int_{0}^{2t}\int_{\p\O}\Phi(x-y,\sigma)\,dS(y)\,d\sigma\\
&=& C \int_{0}^{2t}\int_{\p\O}\Phi(x-y,2t-\sigma)\,dS(y)\,d\sigma.
\end{eqnarray*}
Finally, invoking (\ref{bdry-time int bdd}) with $\Gamma=\p\O$ and $\alpha=0$, the proof is finished.
\end{proof}

\begin{proof}[{\bf Proof of Theorem \ref{Thm, lower bdd, general domain}}]
In this proof, $C$ will denote constants which only depend on $n$ and $\O$, the values of $C$ may be different in different places. But $C^{*}$ and $C_{i}^{*}(i\geq 1)$ will represent fixed constants which only depend on $n$ and $\O$. $M(t)$ represents the same function as in (\ref{max function at time t}). 

For any strictly increasing sequence $\{M_{k}\}_{k\geq 0}$ whose initial term is the same as the $M_{0}$ defined in (\ref{initial max}), we denote $T_{k}$ to be the first time that $M(t)$ reaches $M_{k}$. Obviously, $T_{0}=0$. For any $k\geq 1$, define
\be\label{difference in time, kth step, general}
t_{k}=T_{k}-T_{k-1} \ee
to be the time spent in the kth step. By the maximum principle and the Hopf lemma, there exists $x^{k}\in\ol{\Gamma}_{1}$ such that 
\be\label{maximum pt, kth, general}
u(x^{k},T_{k})=M_{k}.\ee 
Applying the representation formula (\ref{rep formula from any time}) with $T=T_{k-1}$ and $(x,t)=(x^{k},t_{k})$, then
\begin{align}
u(x^{k},T_{k}) = &\,\, 2\int_{\O}\Phi(x^{k}-y,t_{k})\,u(y,T_{k-1})\,dy \notag\\
&-2\int_{0}^{t_{k}}\int_{\p\O}\frac{\p\Phi(x^{k}-y,t_{k}-\tau)}{\p n(y)}\,u(y,T_{k-1}+\tau)\,dS(y)\,d\tau \notag\\
& +2\int_{0}^{t_{k}}\int_{\Gamma_1}\Phi(x^{k}-y,t_{k}-\tau)\,u^{q}(y,T_{k-1}+\tau)\,dS(y)\,d\tau. \label{rep formula for kth step, general}
\end{align}
Combining (\ref{maximum pt, kth, general}) and (\ref{rep formula for kth step, general}),
\begin{eqnarray*}
M_{k} &\leq & 2M_{k-1}\int_{\O}\Phi(x^{k}-y,t_{k})\,dy\\
&&+2M_{k}\int_{0}^{t_{k}}\int_{\p\O}\Big|\frac{\p\Phi(x^{k}-y,t_{k}-\tau)}{\p n(y)}\Big|\,dS(y)\,d\tau \\
&& +2M_{k}^{q}\int_{0}^{t_{k}}\int_{\Gamma_1}\Phi(x^{k}-y,t_{k}-\tau)\,dS(y)\,d\tau.
\end{eqnarray*}
Replacing the term $\int_{\O}\Phi(x^{k}-y,t_{k})\,dy$ by the identity (\ref{identity, bdry}), then
\begin{align*}
M_{k} \leq &\,\, 2M_{k-1}\bigg[\frac{1}{2}+\int_{0}^{t_{k}}\int_{\p\O}\frac{\p\Phi(x^{k}-y,t_{k}-\tau)}{\p n(y)}\,dS(y)\,d\tau\bigg]\\
& +2M_{k}\int_{0}^{t_{k}}\int_{\p\O}\bigg|\frac{\p\Phi(x^{k}-y,t_{k}-\tau)}{\p n(y)}\bigg|\,dS(y)\,d\tau \\
& +2M_{k}^{q}\int_{0}^{t_{k}}\int_{\Gamma_1}\Phi(x^{k}-y,t_{k}-\tau)\,dS(y)\,d\tau.
\end{align*}
Moving the term on the second line of the right hand side to the left, we obtain
\be\label{abstract recursive ineq}
(1-2I_{1})M_{k}\leq (1+2I_{2})M_{k-1}+2I_{3}M_{k}^{q},\ee
where
\begin{eqnarray}
I_{1} &=& \int_{0}^{t_{k}}\int_{\p\O}\bigg|\frac{\p\Phi(x^{k}-y,t_{k}-\tau)}{\p n(y)}\bigg|\,dS(y)\,d\tau, \notag\\
I_{2} &=& \int_{0}^{t_{k}}\int_{\p\O}\frac{\p\Phi(x^{k}-y,t_{k}-\tau)}{\p n(y)}\,dS(y)\,d\tau, \label{def of aux int}\\
I_{3} &=& \int_{0}^{t_{k}}\int_{\Gamma_1}\Phi(x^{k}-y,t_{k}-\tau)\,dS(y)\,d\tau. \notag
\end{eqnarray}
It is readily seen that $|I_{2}|\leq I_{1}$. In addition, by Lemma \ref{Lemma, bdry-time int of heat kernel normal deri}, there exists a constant $C^{*}$ such that 
\be\label{est of I_(1)}I_{1}\leq C^{*}\sqrt{t_{k}}.\ee
If $t_{k}$ satisfies 
\be\label{small t_(k)}
t_{k}\leq \frac{1}{16(C^{*})^{2}},\ee
then $|I_{2}|\leq I_{1}\leq \frac{1}{4}$. As a result, $1-2I_{1}\geq \frac{1}{2}$ and 
\[\frac{1+2I_{2}}{1-2I_{1}}=1+\frac{2(I_{1}+I_{2})}{1-2I_{1}}\leq 1+4(I_{1}+I_{2}).\]
Hence by dividing $1-2I_{1}$ from both sides of (\ref{abstract recursive ineq}), we obtain
\be\label{abstract recursive ineq, simp}
M_{k} \leq \big[1+4(I_{1}+I_{2})\big]M_{k-1}+4I_{3}M_{k}^{q}. 
\ee

In the following, by choosing a suitable sequence $\{M_{k}\}_{k\geq 0}$ and obtaining a lower bound $t_{k*}$ for each $t_{k}$, the sum of all $t_{k*}$ becomes a lower bound for $T^{*}$. First, due to the estimate (\ref{est of I_(1)}) again, 
\be\label{bdd for ratio, general}
I_{1}+I_{2}\leq 2I_{1}\leq 2C^{*}\sqrt{t_{k}}.\ee
Next in order to estimate $I_{3}$, we apply (\ref{bdry-time int bdd}) for $\Gamma=\Gamma_{1}$ and $\alpha=\frac{1}{2(n-1)}$, then there exists some constant $C$ such that
\be\label{bdd for I_(3), general}
I_{3}\leq C|\Gamma_{1}|^{\alpha}t_{k}^{1/4}.\ee
Plugging (\ref{bdd for ratio, general}) and (\ref{bdd for I_(3), general}) into (\ref{abstract recursive ineq, simp}) yields
\be\label{recursive ineq, general}
M_{k}\leq (1+C\sqrt{t_{k}})M_{k-1}+C|\Gamma_{1}|^{\alpha}\,t_{k}^{1/4}\,M_{k}^{q}.\ee
Define 
\be\label{def of M_(k), general}
M_{k}=2^{k}M_{0}.\ee
Then 
\[2^{k}M_{0}\leq (1+C\sqrt{t_{k}})\,2^{k-1}M_{0}+C|\Gamma_{1}|^{\alpha}\,t_{k}^{1/4}\,2^{qk}M_{0}^{q}.\]
Subtracting $2^{k-1}M_{0}$ from both sides, we obtain
\[2^{k-1}M_{0}\leq C\sqrt{t_{k}}\,2^{k-1}M_{0}+C|\Gamma_{1}|^{\alpha}\,t_{k}^{1/4}\,2^{qk}M_{0}^{q}.\]
Dividing by $2^{k-1}M_{0}$,
\[1\leq C\sqrt{t_{k}}+C|\Gamma_{1}|^{\alpha}\,t_{k}^{1/4}\,2^{(q-1)k}M_{0}^{q-1}.\]
Thus,
\[\sqrt{t_{k}}+|\Gamma_{1}|^{\alpha}M_{0}^{q-1}2^{(q-1)k}\,t_{k}^{1/4}-\frac{1}{C}\geq 0.\]
Regarding the left hand side of the above inequality to be a quadratic function in $t_{k}^{1/4}$, then $t_{k}^{1/4}$ has to be greater than its positive root, that is
\[t_{k}^{1/4}\geq \frac{1}{2}\bigg(-|\Gamma_{1}|^{\alpha}M_{0}^{q-1}\,2^{(q-1)k}+\sqrt{|\Gamma_{1}|^{2\alpha}M_{0}^{2(q-1)}\,2^{2(q-1)k}+\frac{4}{C}}\,\bigg).\]
Consequently,
\begin{align*}
t_{k}^{1/4} &\geq \frac{2}{C\Big(|\Gamma_{1}|^{\alpha}M_{0}^{q-1}\,2^{(q-1)k}+\sqrt{|\Gamma_{1}|^{2\alpha}M_{0}^{2(q-1)}\,2^{2(q-1)k}+\frac{4}{C}}\Big)}\\
&\geq \frac{1}{C\sqrt{|\Gamma_{1}|^{2\alpha}M_{0}^{2(q-1)}\,2^{2(q-1)k}+\frac{4}{C}}}.
\end{align*}
Hence, there exists $C_{1}^{*}$ such that
\be\label{lower bound for t_(k), cond}
t_{k}\geq \frac{1}{C_{1}^{*}\,\big(|\Gamma_{1}|^{4\alpha}M_{0}^{4(q-1)}\,2^{4(q-1)k}+1\big)}.\ee

As a summary of the above paragraph, by choosing $M_{k}=2^{k}M_{0}$, then (\ref{small t_(k)}) implies (\ref{lower bound for t_(k), cond}). Therefore, 
\[t_{k} \geq \min\bigg\{\frac{1}{16(C^{*})^{2}},\, \frac{1}{C_{1}^{*}\,\big(|\Gamma_{1}|^{4\alpha}M_{0}^{4(q-1)}\,2^{4(q-1)k}+1\big)}\bigg\}.\]
Denoting \[C_{2}^{*}=\min\Big\{\frac{1}{16(C^{*})^{2}}, \frac{1}{C_{1}^{*}}\Big\},\]
then
\be\label{lower bound for t_(k)}
t_{k}\geq \frac{C_{2}^{*}}{|\Gamma_{1}|^{4\alpha}M_{0}^{4(q-1)}\,2^{4(q-1)k}+1}.\ee
Hence,
\begin{align*}
T^{*} =\sum_{k=1}^{\infty}t_{k}&\geq C_{2}^{*}\sum_{k=1}^{\infty}\frac{1}{|\Gamma_{1}|^{4\alpha}M_{0}^{4(q-1)}\,2^{4(q-1)k}+1}\\
&\geq C_{2}^{*}\int_{1}^{\infty}\frac{1}{|\Gamma_{1}|^{4\alpha}M_{0}^{4(q-1)}\,2^{4(q-1)x}+1}\,dx\\
&= \frac{C_{2}^{*}}{4(q-1)\ln(2)}\ln\Bigg(1+\frac{1}{|\Gamma_{1}|^{4\alpha}M_{0}^{4(q-1)}\,2^{4(q-1)}}\Bigg).
\end{align*}
Recalling $\alpha=\frac{1}{2(n-1)}$, (\ref{lower bdd, general domain}) follows.
\end{proof}

\section{Proof of Theorem \ref{Thm, lower bdd, convex}}
\label{Sec, proof for convex}
Define  
\be\label{const on q}
E_{q}=(q-1)^{q-1}/q^q, \quad\forall\, q>1. \ee
By elementary calculus,
\be\label{est on E_q}
\frac{1}{3q}<E_{q}<\min\Big\{ \frac{1}{q}, \frac{1}{(q-1)\,e}\Big\}<1. \ee
The lemma below is a simple generalization of Lemma 3.2 in \cite{YZ1611}.
\begin{lemma}\label{Lemma, criteria for step continue}
For any $q>1$ and $m>0$, write $E_{q}$ as in (\ref{const on q}) and define $g:(m,\infty)\rightarrow \m{R}$ by
\be\label{auxiliary fcn}
g(\lambda)=\frac{\lambda-m}{\lambda^{q}},\quad\forall\,\lam>m.\ee
Then the following two claims hold.
\begin{itemize}
\item[(1)] For any $y\in\big(0, m^{1-q}E_{q}\big]$, there exists unique $\lambda\in\big(m,\frac{q}{q-1}m\big]$ such that $g(\lambda)=y$.
\item[(2)] For any $y>m^{1-q}E_{q}$, there does not exist $\lambda>m$ such that $g(\lambda)=y$.
\end{itemize}
\end{lemma}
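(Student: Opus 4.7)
The plan is to carry out a standard one-variable calculus analysis of $g$ on $(m,\infty)$: locate its unique critical point, determine monotonicity, compute the maximum value, and then read off both claims from the intermediate value theorem together with strict monotonicity.

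First I would rewrite $g(\lambda)=\lambda^{1-q}-m\lambda^{-q}$ and differentiate to obtain
\[
g'(\lambda)=\lambda^{-q-1}\bigl[(1-q)\lambda+qm\bigr].
\]
Since $q>1$, the bracketed factor vanishes at exactly one point $\lambda_{*}=\frac{q}{q-1}m$, is positive on $(m,\lambda_{*})$, and is negative on $(\lambda_{*},\infty)$. Hence $g$ is strictly increasing on $(m,\lambda_{*}]$ and strictly decreasing on $[\lambda_{*},\infty)$. A direct substitution (numerator $\lambda_{*}-m=m/(q-1)$, denominator $\lambda_{*}^{q}=q^{q}m^{q}/(q-1)^{q}$) gives
\[
g(\lambda_{*})=\frac{(q-1)^{q-1}}{q^{q}}\,m^{1-q}=E_{q}\,m^{1-q},
\]
which is the global maximum of $g$ on $(m,\infty)$.

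For claim (1), I would note $g$ is continuous on $[m,\lambda_{*}]$ with $g(m)=0$ and $g(\lambda_{*})=m^{1-q}E_{q}$, so for any $y\in(0,m^{1-q}E_{q}]$ the intermediate value theorem yields some $\lambda\in(m,\lambda_{*}]$ with $g(\lambda)=y$; uniqueness inside $(m,\lambda_{*}]$ is immediate from the strict monotonicity on this interval. For claim (2), since $y>m^{1-q}E_{q}=\max_{\lambda>m}g(\lambda)$, no $\lambda>m$ can satisfy $g(\lambda)=y$.

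There is no real obstacle here; the only point that requires a moment of care is the algebraic simplification of $g(\lambda_{*})$ into the form $E_{q}m^{1-q}$, which is exactly the definition (\ref{const on q}). The statement is essentially the single-bump shape of $g$ on $(m,\infty)$ together with $g(m^{+})=0$ and $\lim_{\lambda\to\infty}g(\lambda)=0$ (the latter following from $q>1$, though it is not strictly needed for either claim since the max bound already forces claim (2)).
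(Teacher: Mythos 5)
Your proposal is correct and follows essentially the same route as the paper: differentiate $g$, identify the unique critical point $\lambda_{*}=\frac{q}{q-1}m$, establish strict monotonicity on either side, and compute $g(\lambda_{*})=m^{1-q}E_{q}$ so that both claims follow from monotonicity and the intermediate value theorem. The paper simply states these monotonicity facts without working out the derivative, so your write-up is a slightly more detailed version of the same argument.
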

\begin{proof}
Since $g$ is strictly increasing on the interval $\big(m,\frac{q}{q-1}m\big]$ and strictly decreasing on the interval $\big[\frac{q}{q-1}m, \infty\big)$, it reaches the maximum at $\lambda=\frac{q}{q-1}m$. Noticing that
\[g\Big(\frac{q}{q-1}m\Big)=m^{1-q}E_{q},\]
then the claims (1) and (2) follow directly. 
\end{proof}

Now we can carry out the main proof in this section.
\begin{proof}[Proof of Theorem \ref{Thm, lower bdd, convex}]
We will demonstrate detailed proof for the case $n\geq 3$, the proof for the case $n=2$ is similar and will be briefly mentioned at the end. In the proof below, $C$ will denote the constants which only depend on $n$ and $\O$, the values of $C$ may be different in different places. But $C^{*}$ will represent a fixed constant which only depends on $n$ and $\O$. Let $M(t)$ be defined as in (\ref{max function at time t}). 

{\bf Step 1.} The first part is exactly the same as the second paragraph in the proof of Theorem \ref{Thm, lower bdd, general domain}, namely we adopt the same notations and the same estimates from (\ref{difference in time, kth step, general}) through (\ref{abstract recursive ineq, simp}). In particular, we make the assumption (\ref{small t_(k)}). 

{\bf Step 2.} In this step, we will find a  constant $t_{*}>0$ and a finite strictly increasing sequence $\{M_{k}\}_{0\leq k\leq L}$ such that $t_{k}\geq t_{*}$ for $1\leq k\leq L$. Then in Step 3, a lower bound for $Lt_{*}$ will be derived. 

Due to the convexity of $\O$, the normal derivative $\dfrac{\p\Phi(x^{k}-y,t_{k}-\tau)}{\p n(y)}$ in (\ref{def of aux int}) is always nonpositive. As a result, 
\be\label{bdd for ratio, convex}
I_{1}+I_{2}=\int_{0}^{t_{k}}\int_{\p\O}\frac{\p\Phi(x^{k}-y,t_{k}-\tau)}{\p n(y)}+\bigg|\frac{\p\Phi(x^{k}-y,t_{k}-\tau)}{\p n(y)}\bigg|\,dS(y)\,d\tau=0.\ee
To estimate $I_{3}$, we apply Lemma \ref{Lemma, bdry-time int bdd, critical, n large} to conclude
\be\label{bdd for I_(3), convex}
I_{3}\leq C|\Gamma_{1}|^{1/(n-1)}\ee
for some constant $C=C(n,\O)$. Hence plugging (\ref{bdd for ratio, convex}) and (\ref{bdd for I_(3), convex}) into (\ref{abstract recursive ineq, simp}), we get
\be\label{recursive ineq, convex}
M_{k}\leq M_{k-1}+C^{*}|\Gamma_{1}|^{1/(n-1)}M_{k}^{q}\ee
for some constant $C^{*}=C^{*}(n,\O)$. As a summary, the argument so far claims that if (\ref{small t_(k)}) holds, then $M_{k}$ will satisfy (\ref{recursive ineq, convex}).

Based on the above observation, if we choose
\be\label{choice of delta_(1), convex}
\delta_{1}=2C^{*}|\Gamma_{1}|^{1/(n-1)}\ee
and define $M_{k}$ to be the solution (if it exists) to
\be\label{choice of M_(k), convex}
\frac{M_{k}-M_{k-1}}{M_{k}^{q}}=\delta_{1}, \ee
then (\ref{small t_(k)}) can not hold since otherwise (\ref{recursive ineq, convex}) will be violated. Consequently $t_{k}>t_{*}$, where
\be\label{def of t_(*), convex}
t_{*}=\frac{1}{16(C^{*})^{2}}. \ee

Due to Lemma \ref{Lemma, criteria for step continue}, the existence of a solution $M_{k}$ to (\ref{choice of M_(k), convex}) is equivalent to the inequality $M_{k-1}^{q-1}\delta_{1}\leq E_{q}$. In addition, as long as such a solution exists, $M_{k}$ can be chosen to satisfy
\[M_{k-1}<M_{k}\leq \frac{q}{q-1}\,M_{k-1}.\] 
Thus, the strategy of constructing $\{M_{k}\}$ is summarized as below. First, define $M_{0}$ and $\delta_{1}$ as in (\ref{initial max}) and (\ref{choice of delta_(1), convex}). Next suppose $M_{k-1}$ has been constructed for some $k\geq 1$, then whether defining $M_{k}$ depends on how large $M_{k-1}$ is.
\begin{itemize}
\item[$\diamond$] If $M_{k-1}^{q-1}\,\delta_{1}\leq E_{q}$, then we define $M_{k}\in \big(M_{k-1}, \frac{q}{q-1}\,M_{k-1}\big]$ to be the solution to (\ref{choice of M_(k), convex}).

\item[$\diamond$] If $M_{k-1}^{q-1}\,\delta_{1}> E_{q}$, then there does not exist $M_{k}>M_{k-1}$ which solves (\ref{choice of M_(k), convex}). So we do not define $M_{k}$ and stop the construction.  
\end{itemize}

According to this construction, if $\{M_{k}\}_{1\leq k\leq L_{0}}$ have been defined, then $T_{k}-T_{k-1}\geq t_{*}$ for any $1\leq k\leq L_{0}$. Therefore, $T_{k}\geq kt_{*}$ for any $1\leq  k\leq L_{0}$. Since $T^{*}$ is finite, $L_{0}\leq T^{*}/t_{*}<\infty$, which means the cardinality of $\{M_{k}\}$ has to be finite (actually this fact can also be justified by analysing the construction directly, see Lemma \ref{Lemma, lower bdd of steps, basic}). So we can assume the constructed sequence is $\{M_{k}\}_{0\leq k\leq L}$ for some finite $L$. 

{\bf Step 3.} By Lemma \ref{Lemma, lower bdd of steps, basic}, 
\[L>\frac{1}{10(q-1)}\Big(\frac{1}{M_{0}^{q-1}\delta_{1}}-3q\Big).\]
To obtain an effective lower bound, $\frac{1}{M_{0}^{q-1}\delta_{1}}$ should be greater than $3q$. If requiring 
\be\label{small surface area, convex}
\frac{1}{M_{0}^{q-1}\delta_{1}}\geq 6q,\ee
then 
\[L>\frac{1}{20(q-1)M_{0}^{q-1}\delta_{1}}=\frac{1}{40C^{*}(q-1)M_{0}^{q-1}|\Gamma_{1}|^{1/(n-1)}}.\]
Denote $Y=M_{0}^{q-1}|\Gamma_{1}|^{1/(n-1)}$. Then
\[T^{*}\geq Lt_{*}>\frac{C}{(q-1)Y}\]
for some constant $C$. Finally, noticing that (\ref{small surface area, convex}) is equivalent to 
\[Y\leq \frac{1}{12C^{*}q},\]
the proof for the case $n\geq 3$ is finished by setting $Y_{0}=1/(12C^{*})$.

When $n=2$, the process is almost identical as the above except two differences. First, Lemma \ref{Lemma, bdry-time int bdd, critical, n=2} will be applied instead of Lemma \ref{Lemma, bdry-time int bdd, critical, n large}, so the term $|\Gamma_{1}|^{1/(n-1)}$ in the above proof needs to be replaced by $|\Gamma_{1}|\ln\big(\frac{1}{|\Gamma_{1}|}+1\big)$. Secondly, due to the restriction $t\leq 1$ in Lemma \ref{Lemma, bdry-time int bdd, critical, n=2}, $t_{k}$ should satisfy both $t_{k}\leq 1$ and (\ref{small t_(k)}) in order to justify (\ref{recursive ineq, convex}). Consequently the choice of $t_{*}$ will be
\be\label{def of t_(*), convex, n=2}
t_{*}=\min\Big\{\frac{1}{16(C^{*})^{2}},\, 1\Big\}\ee
instead of (\ref{def of t_(*), convex}). Fortunately, this additional requirement will not bring major changes to the proof. Actually, without loss of generality, we can choose $C^{*}$ to be larger than $1/4$, which makes $\frac{1}{16(C^{*})^{2}}\leq 1$. As a result, (\ref{def of t_(*), convex, n=2}) coincides with (\ref{def of t_(*), convex}). Then the rest of the proof is the same.
\end{proof}

The following lemma has been applied in the proof of Theorem \ref{Thm, lower bdd, convex} and will be used again in the proof of Theorem \ref{Thm, lower bdd, locally convex}, so we state it separately for convenience. It is a generalization of Lemma 3.3 in \cite{YZ1611}, but its statement and proof are much simpler. 
\begin{lemma}\label{Lemma, lower bdd of steps, basic}
Given $q>1$, $M_{0}>0$ and $\delta_{1}>0$, denote $E_{q}$ as  (\ref{const on q}) and construct a (finite) sequence $\{M_{k}\}_{k\geq 0}$ inductively as follows. Suppose $M_{k-1}$ has been constructed for some $k\geq 1$, then based on Lemma \ref{Lemma, criteria for step continue}, whether defining $M_{k}$ depends on how large $M_{k-1}$ is.
\begin{itemize}
\item[$\diamond$] If $M_{k-1}^{q-1}\,\delta_{1}\leq E_{q}$, then we define $M_{k}\in \big(M_{k-1}, \frac{q}{q-1}\,M_{k-1}\big]$ to be the solution to 
\be\label{def of M_(k), lemma}
\frac{M_{k}-M_{k-1}}{M_{k}^{q}}=\delta_{1}. \ee

\item[$\diamond$] If $M_{k-1}^{q-1}\,\delta_{1}>E_{q}$, then there does not exist $M_{k}>M_{k-1}$ which solves (\ref{def of M_(k), lemma}). So we do not define $M_{k}$ and stop the construction.  
\end{itemize}
We claim this construction stops in finite steps and if the last term is denoted as $M_{L}$, then   
\be\label{lower bdd of steps, basic} 
L>\frac{1}{10(q-1)}\Big(\frac{1}{M_{0}^{q-1}\delta_{1}}-3q\Big).\ee
\end{lemma}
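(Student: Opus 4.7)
Plan of proof. I would recast the recursion in terms of the rescaled variable $a_k := M_k^{q-1}\delta_1$. The defining equation $M_k - M_{k-1} = \delta_1 M_k^q$ gives $M_{k-1}/M_k = 1-a_k$, hence the clean one-parameter recursion
\[a_{k-1} = a_k(1-a_k)^{q-1},\qquad a_0 = M_0^{q-1}\delta_1.\]
By Lemma \ref{Lemma, criteria for step continue}, the restriction $M_k\in\bigl(M_{k-1},\tfrac{q}{q-1}M_{k-1}\bigr]$ translates to $a_k\leq 1/q$ throughout, and the termination rule $M_L^{q-1}\delta_1 > E_q$ reads simply $a_L > E_q$. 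The motivation for this change of variables is that in the continuous limit $a' \sim (q-1)a^2$, the blow-up time is $\sim 1/[(q-1)a(0)]$ and the reciprocal $1/a$ decreases almost linearly in the iteration index with slope $\approx q-1$.

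The heart of the argument is to telescope $1/a_k$. A direct computation from the recursion yields
\[\frac{1}{a_{k-1}} - \frac{1}{a_k} \;=\; \frac{1}{a_k}\bigl[(1-a_k)^{-(q-1)}-1\bigr] \;=\; \frac{q-1}{a_k}\int_0^{a_k}(1-s)^{-q}\,ds.\]
The key one-variable estimate I would establish is
\[(1-a)^{-(q-1)} \;\leq\; 1 + 10(q-1)\,a\qquad\text{for all }a\in(0,1/q],\]
equivalently $\tfrac{1}{a_{k-1}}-\tfrac{1}{a_k}\leq 10(q-1)$. For $a$ small this is immediate from the Taylor expansion $(1-a)^{-(q-1)} = 1+(q-1)a+O((q-1)^2 a^2)$; the delicate regime is $a$ close to $1/q$, where (after noting monotonicity of $a\mapsto(1-a)^{-(q-1)}-1-10(q-1)a$ on the relevant range) the inequality reduces to the endpoint check $(q/(q-1))^{q-1} \leq 1+10(q-1)/q$, a one-variable calculus problem in $q$. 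Pinning down this endpoint inequality uniformly in $q>1$ is the main obstacle of the proof, since $(q/(q-1))^{q-1}$ is a rather subtle object near $q=1^{+}$ where both sides of the inequality collapse to $1$.

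Granted the step-wise bound, the proof finishes by telescoping:
\[\frac{1}{a_0}-\frac{1}{a_L} \;=\; \sum_{k=1}^{L}\Big(\frac{1}{a_{k-1}}-\frac{1}{a_k}\Big) \;\leq\; 10L(q-1).\]
Since \eqref{est on E_q} gives $E_q > 1/(3q)$, we have $1/a_L < 1/E_q < 3q$, so
\[\frac{1}{M_0^{q-1}\delta_1} - 3q \;<\; \frac{1}{a_0}-\frac{1}{a_L} \;\leq\; 10L(q-1),\]
which rearranges to \eqref{lower bdd of steps, basic}. Finite termination is automatic from the opposite bound $(1-a)^{-(q-1)} \geq 1+(q-1)a$ (convexity of $a\mapsto(1-a)^{-(q-1)}$), which forces $\tfrac{1}{a_{k-1}}-\tfrac{1}{a_k}\geq q-1>0$; hence $L \leq 1/[(q-1)a_0] < \infty$, so the construction must stop after finitely many steps.
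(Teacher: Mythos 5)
Your change of variables $a_k = M_k^{q-1}\delta_1$ and the idea of telescoping $1/a_k$ are exactly the right tools — and in fact are what the paper does too, just dressed differently. However, the ``key one-variable estimate'' you flag as the main obstacle,
\[
(1-a)^{-(q-1)} \leq 1 + 10(q-1)a \qquad \text{for all } a\in(0,1/q],
\]
is actually \emph{false} for $q$ sufficiently close to $1$, and no fixed constant in place of $10$ can repair it. At the endpoint $a=1/q$ it reads $\bigl(\tfrac{q}{q-1}\bigr)^{q-1}\leq 1 + \tfrac{10(q-1)}{q}$; setting $p=q-1\to 0^{+}$, the left side is $1 + p\ln\tfrac1p + O(p)$ while the right side is $1 + 10p + O(p^2)$, and $p\ln\tfrac1p/p=\ln\tfrac1p\to\infty$. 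Concretely, at $q=1+10^{-6}$ the left side is about $1 + 1.38\times 10^{-5}$ while the right side is about $1+10^{-5}$. The same failure already occurs at $a\approx E_q$, not just at $a=1/q$, so you cannot salvage the argument merely by excluding $k=L$: when $q\to 1^{+}$ the sequence $\{a_k\}$ spends many steps in the region $a_k>1/2$ where the increments $\tfrac{1}{a_{k-1}}-\tfrac{1}{a_k}$ are of order $(q-1)\ln\tfrac{1}{q-1}$ rather than $O(q-1)$.

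The paper sidesteps this by never estimating the increments of $\{a_k\}$ directly. Instead it introduces a \emph{shadow} sequence $\{y_k\}$ obeying the same forward recursion $y_k=y_{k-1}(1-y_{k-1})^{q-1}$ but seeded at $y_0=\min\{1/2,E_q\}$. Because $y_0\leq 1/2$ and the sequence is decreasing, all $y_{k}$ stay in $(0,1/2]$, and on that range the mean value theorem cleanly gives $(1-y)^{q-1}\geq 1-2(q-1)y$ and hence $\tfrac{1}{y_k}-\tfrac{1}{y_{k-1}}<10(q-1)$ (using additionally $y_{k-1}\leq E_q<\tfrac{1}{(q-1)e}$). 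Monotonicity of $h(t)=t(1-t)^{q-1}$ on $(0,1/q]$ together with $y_0<a_L$ then yields $y_k<a_{L-k}$, in particular $y_L<a_0$, and the telescoping bound on $\{y_k\}$ transfers to the claim about $L$. In short: the right object to telescope is not $\{1/a_k\}$ but $\{1/y_k\}$ for a copy of the same dynamics started from a point that is \emph{always} $\leq 1/2$; this removes exactly the $a$-near-$1$ regime in which your pointwise inequality breaks down. Your finite-termination argument via $(1-a)^{-(q-1)}\geq 1+(q-1)a$ is correct (the paper uses a slightly different but equivalent bound), and the final bookkeeping using $E_q>1/(3q)$ matches the paper.
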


\begin{proof} 
First, we will show the construction has to stop in finite steps. In fact, it follows from (\ref{def of M_(k), lemma}) that the sequence $\{M_{k}\}$ is strictly increasing and 
\[M_{k}= M_{k-1}+M_{k}^{q}\delta_{1}\geq \big(1+M_{0}^{q-1}\delta_{1}\big)M_{k-1}.\]
As a result, 
\[M_{k}\geq \big(1+M_{0}^{q-1}\delta_{1}\big)^{k}M_{0}.\]
Thus $M_{k}^{q-1}$ will exceed $E_{q}/\delta_{1}$ when $k$ is sufficiently large, which forces the construction to stop. 

Next suppose the constructed sequence is $\{M_{k}\}_{0\leq k\leq L}$. The lower bound (\ref{lower bdd of steps, basic}) for $L$ will be justified below.

{\bf Case 1.} $M_{0}^{q-1}\delta_{1}>E_{q}$. In this case, it follows from (\ref{est on E_q}) that
\[\frac{1}{M_{0}^{q-1}\delta_{1}}<\frac{1}{E_{q}}<3q.\]
Thus (\ref{lower bdd of steps, basic}) holds automatically since the right hand side of (\ref{lower bdd of steps, basic}) is negative. 

{\bf Case 2.} $M_{0}^{q-1}\delta_{1}\leq E_{q}$. In this case, it is evident from the construction that $L\geq 1$. Moreover, 
\[M_{L-1}^{q-1}\delta_{1}\leq E_{q} \quad \text{and}\quad M_{L}^{q-1} \delta_{1}> E_{q}.\]
According to the recursive relation (\ref{def of M_(k), lemma}), 
\[M_{k-1}=M_{k}\big(1-M_{k}^{q-1} \delta_{1}\big).\] 
Raising both sides to the power $q-1$ and multiplying by $\delta_{1}$,
\[M_{k-1}^{q-1}\delta_{1}=M_{k}^{q-1}\big(1-M_{k}^{q-1} \delta_{1}\big)^{q-1}\delta_{1}.\]
Let $x_{k}=M_{k}^{q-1}\delta_{1}$. Then 
\be\label{iteration for x_k}
x_{k-1}=x_{k}\,(1-x_{k})^{q-1}, \quad\forall\, 1\leq k\leq L.\ee
Moreover, $$x_0=M_{0}^{q-1}\delta_{1},\quad x_{L-1}\leq E_{q} \quad\text{and}\quad x_{L}>E_{q}.$$
Noticing that $M_{L}\leq \frac{q}{q-1}M_{L-1}$, so
\[x_{L}=\bigg(\frac{M_{L}}{M_{L-1}}\bigg)^{q-1}x_{L-1}\leq \Big(\frac{q}{q-1}\Big)^{q-1}E_{q}=\frac{1}{q}.\]

Since the right hand side of (\ref{iteration for x_k}) is nonlinear in $x_{k}$, it seems impossible to express $x_{k}$ as an explicit formula in terms of $x_{k-1}$. This motivates us to consider the ``reversed'' relation of (\ref{iteration for x_k}), namely a new sequence $\{y_{k}\}_{0\leq k\leq L}$ defined in the following way: $y_{0}\triangleq \min\{1/2, E_{q}\}$ and
\be\label{iteration for y_k}
y_{k}\triangleq y_{k-1}(1-y_{k-1})^{q-1}, \quad \forall\, 1\leq k\leq L.\ee
To analyse the sequence $\{y_{k}\}$, we define $h:(0,1)\rightarrow\m{R}$ by 
\[h(t)=t\,(1-t)^{q-1}\]
so that $y_{k}=h(y_{k-1})$ for $1\leq k\leq L$. It is easy to see that $h$ is strictly increasing on $(0,1/q]$ and strictly decreasing on $[1/q,1)$. Noticing $0<y_0<x_{L}\leq 1/q$, so 
$$y_{1}=h(y_0)<h(x_{L})=x_{L-1}.$$
Keep doing this, we get $y_{k}<x_{L-k}$ for any $0\leq k\leq L$. In particular, $y_{L}<x_{0}=M_{0}^{q-1}\delta_{1}$. 

Since $\{y_{k}\}$ is a decreasing positive sequence and $y_{0}\leq 1/2$, then $y_{k}\leq 1/2$ for any $0\leq k\leq L$. As a result, it follows from (\ref{iteration for y_k}) and the mean value theorem that for any $1\leq k\leq L$,
\be\label{quadratic ineq for y_k}
y_{k}\geq y_{k-1}\big[1-2(q-1)y_{k-1}\big].\ee
Recalling (\ref{est on E_q}) again, 
$$y_{k-1}\leq y_{0}\leq E_{q}< \frac{1}{(q-1)e},$$
so \[1-2(q-1)y_{k-1}>1-\frac{2}{e}>\frac{1}{5}.\]
Hence, taking the reciprocal in (\ref{quadratic ineq for y_k}) yields 
\begin{align}
\frac{1}{y_{k}} &\leq \frac{1}{y_{k-1}\big[1-2(q-1)y_{k-1}\big]} \notag\\
& =\frac{1}{y_{k-1}}+\frac{2(q-1)}{1-2(q-1)y_{k-1}} \notag\\
&< \frac{1}{y_{k-1}}+10(q-1). \label{iteration ineq}
\end{align}
Summing up (\ref{iteration ineq}) for $k$ from $1$ to $L$, then
\begin{align}\label{ineq between last and first for y_k}
\frac{1}{y_{L}} <\frac{1}{y_{0}}+10(q-1)L.
\end{align}
Since $y_{L}<M_{0}^{q-1}\delta_{1}$ and 
\[y_{0}=\min\Big\{\frac{1}{2}, E_{q}\Big\}>\frac{1}{3q},\]
it follows from (\ref{ineq between last and first for y_k}) that \[\frac{1}{M_{0}^{q-1}\delta_{1}}<3q+10(q-1)L.\]
Thus,  
\[L>\frac{1}{10(q-1)}\Big(\frac{1}{M_{0}^{q-1}\delta_{1}}-3q\Big).\]
\end{proof}

\section{Proof of Theorem \ref{Thm, lower bdd, locally convex}}
\label{Sec, proof for locally convex}
\begin{proof}[Proof of Theorem \ref{Thm, lower bdd, locally convex}]

We will demonstrate detailed proof for the case $n\geq 3$, the proof for the case $n=2$ is similar and will be briefly mentioned at the end. In the proof below, $C$ and $C_{i}(i\geq 1)$ will denote the constants which only depend on $n$, $\O$ and $d$, the values of $C$ and $C_{i}$ may be different in different places. But $C^{*}$ and $C_{i}^{*}(i\geq 1)$ will represent fixed constants which only depend on $n$, $\O$ and $d$. Let $M(t)$ be defined as in (\ref{max function at time t}).

{\bf Step 1.} The first part is exactly the same as the second paragraph in the proof of Theorem \ref{Thm, lower bdd, general domain}, namely we adopt the same notations and the same estimates from (\ref{difference in time, kth step, general}) through (\ref{abstract recursive ineq, simp}). In particular, we make the assumption (\ref{small t_(k)}). 

{\bf Step 2.} In this step, we will find a  constant $t_{*}>0$ and a finite strictly increasing sequence $\{M_{k}\}_{0\leq k\leq L}$ such that $t_{k}\geq t_{*}$ for $1\leq k\leq L$. Then in Step 3, a lower bound for $Lt_{*}$ will be derived. 

Due to the local convexity near $\Gamma_{1}$, it follows from (\ref{identity, bdry}) and Corollary \ref{Cor, perturbation of id} that 
\[I_{1}+I_{2}\leq C\,t_{k}\exp\Big(-\frac{d^{2}}{8t_{k}}\Big).\]
Because of the assumption (\ref{small t_(k)}), the above inequality implies 
\be\label{bdd for ratio, local convex}
I_{1}+I_{2}\leq C\exp\Big(-\frac{d^{2}}{8t_{k}}\Big).\ee
On the other hand, we estimate $I_{3}$ in the same way as (\ref{bdd for I_(3), convex}). Now plugging (\ref{bdd for ratio, local convex}) and (\ref{bdd for I_(3), convex}) into (\ref{abstract recursive ineq, simp}), then
\be\label{recursive ineq, local convex}
M_{k}\leq \Big[1+C_{1}^{*}\exp\Big(-\frac{d^{2}}{8t_{k}}\Big)\Big]M_{k-1}+C_{2}^{*}|\Gamma_{1}|^{1/(n-1)}M_{k}^{q},\ee
for two constants $C_{1}^{*}$ and $C_{2}^{*}$. Next if $t_{k}$ is so small that
\be\label{cond on time step, kth}
\exp\Big(-\frac{d^{2}}{8t_{k}}\Big)\leq \frac{1}{2C_{1}^{*}}\,\frac{M_{k}-M_{k-1}}{M_{k-1}},\ee
which is equivalent to 
\[M_{k}-\Big[1+C_{1}^{*}\exp\Big(-\frac{d^{2}}{8t_{k}}\Big)\Big]M_{k-1}\geq \frac{1}{2}(M_{k}-M_{k-1}).\]
Then it follows from (\ref{recursive ineq, local convex}) that
\be\label{recursive ineq, locally convex}
\frac{M_{k}-M_{k-1}}{M_{k}^{q}}\leq 2C_{2}^{*}|\Gamma_{1}|^{1/(n-1)}.\ee
As a summary, the argument so far claims if both (\ref{small t_(k)}) and (\ref{cond on time step, kth}) hold, then $M_{k}$ will satisfy (\ref{recursive ineq, locally convex}).

Based on this observation, if we choose
\be\label{choice of delta_(1), local convex}
\delta_{1}=4C_{2}^{*}|\Gamma_{1}|^{1/(n-1)}\ee
and define $M_{k}$ to be the solution (if it exists) to
\be\label{choice of M_(k), local convex}
\frac{M_{k}-M_{k-1}}{M_{k}^{q}}=\delta_{1}, \ee
then either (\ref{small t_(k)}) or (\ref{cond on time step, kth}) can not hold since otherwise (\ref{recursive ineq, locally convex}) will be violated. The invalidity of (\ref{small t_(k)}) means 
\be\label{lower bdd 1 for t_(k)}t_{k}>\frac{1}{16(C^{*})^{2}}.\ee 
On the other hand, due to (\ref{choice of M_(k), local convex}), the failure of (\ref{cond on time step, kth}) implies
\be\label{t_(k) not small}
\exp\Big(-\frac{d^{2}}{8t_{k}}\Big) >\frac{1}{2C_{1}^{*}}\,\frac{M_{k}-M_{k-1}}{M_{k-1}}=\frac{M_{k}^{q}\delta_{1}}{2C_{1}^{*}M_{k-1}}\geq \frac{M_{0}^{q-1}\delta_{1}}{2C_{1}^{*}}.\ee
If 
\be\label{small surface area, first}
M_{0}^{q-1}\delta_{1}\leq C_{1}^{*},\ee
then the right hand side of (\ref{t_(k) not small}) is smaller than 1. Therefore, (\ref{t_(k) not small}) is equivalent to 
\be\label{lower bdd 2 for t_(k)} t_{k}>\frac{d^{2}}{8}\bigg[\ln\Big(\frac{2C_{1}^{*}}{M_{0}^{q-1}\delta_{1}}\Big)\bigg]^{-1}.\ee 
In summary, if (\ref{small surface area, first}) holds, then it follows from (\ref{lower bdd 1 for t_(k)}) and (\ref{lower bdd 2 for t_(k)}) that $t_{k}\geq t_{*}$, where 
\be\label{def of t_(*), locally convex}
t_{*}=\min\bigg\{\frac{1}{16(C^{*})^{2}},\, \frac{d^{2}}{8}\bigg[\ln\Big(\frac{2C_{1}^{*}}{M_{0}^{q-1}\delta_{1}}\Big)\bigg]^{-1}\bigg\}.\ee

Due to Lemma \ref{Lemma, criteria for step continue}, the existence of a solution $M_{k}$ to (\ref{choice of M_(k), local convex}) is equivalent to the inequality $M_{k-1}^{q-1}\delta_{1}\leq E_{q}$. In addition, as long as such a solution exists, $M_{k}$ can be chosen to satisfy
\[M_{k-1}<M_{k}\leq \frac{q}{q-1}\,M_{k-1}.\] 
Thus, the strategy of constructing $\{M_{k}\}$ is summarized as following. First, define $M_{0}$ and $\delta_{1}$ as in (\ref{initial max}) and (\ref{choice of delta_(1), local convex}). Next suppose $M_{k-1}$ has been constructed for some $k\geq 1$, then whether defining $M_{k}$ depends on how large $M_{k-1}$ is.
\begin{itemize}
\item[$\diamond$] If $M_{k-1}^{q-1}\,\delta_{1}\leq E_{q}$, then we define $M_{k}\in \big(M_{k-1}, \frac{q}{q-1}\,M_{k-1}\big]$ to be the solution to (\ref{choice of M_(k), local convex}).

\item[$\diamond$] If $M_{k-1}^{q-1}\,\delta_{1}> E_{q}$, then there does not exist $M_{k}>M_{k-1}$ which solves (\ref{choice of M_(k), local convex}). So we do not define $M_{k}$ and stop the construction.  
\end{itemize}

According to this construction, if $\{M_{k}\}_{1\leq k\leq L_{0}}$ have been defined, then $T_{k}-T_{k-1}\geq t_{*}$ for any $1\leq k\leq L_{0}$. Therefore, $T_{k}\geq kt_{*}$ for any $1\leq  k\leq L_{0}$. Since $T^{*}$ is finite, $L_{0}\leq T^{*}/t_{*}<\infty$, which means the cardinality of $\{M_{k}\}$ has to be finite (actually this fact can also be justified by analysing the construction directly, see Lemma \ref{Lemma, lower bdd of steps, basic}). So we can assume the constructed sequence is $\{M_{k}\}_{0\leq k\leq L}$ for some finite $L$.

{\bf Step 3.} By Lemma \ref{Lemma, lower bdd of steps, basic}, 
\[L>\frac{1}{10(q-1)}\Big(\frac{1}{M_{0}^{q-1}\delta_{1}}-3q\Big).\]
If 
\be\label{small surface area, second}
M_{0}^{q-1}\delta_{1}\leq \frac{1}{6q},\ee
then 
\[L\geq \frac{1}{20(q-1)M_{0}^{q-1}\delta_{1}}.\]
Combining the assumptions (\ref{small surface area, first}) and  (\ref{small surface area, second}), if 
\be\label{small surface area}
M_{0}^{q-1}\delta_{1}\leq \min\Big\{C_{1}^{*}, \frac{1}{6q}\Big\}, \ee
then 
\be\label{lower bdd for T^(*), rough}
T^{*}\geq Lt_{*}\geq \frac{1}{20(q-1)M_{0}^{q-1}\delta_{1}}\min\bigg\{\frac{1}{16(C^{*})^{2}},\, \frac{d^{2}}{8}\bigg[\ln\Big(\frac{2C_{1}^{*}}{M_{0}^{q-1}\delta_{1}}\Big)\bigg]^{-1}\bigg\}.\ee
Denote
\[Y=M_{0}^{q-1}|\Gamma_{1}|^{1/(n-1)}.\]
Recalling $\delta_{1}=4C_{2}^{*}|\Gamma_{1}|^{1/(n-1)}$, we can rewrite (\ref{small surface area}) and (\ref{lower bdd for T^(*), rough}) as 
\be\label{small surface area'}
Y\leq \min\Big\{\frac{C_{1}^{*}}{4C_{2}^{*}},\, \frac{1}{24C_{2}^{*}q}\Big\}
\tag{\ref{small surface area}$'$}\ee
and
\be\label{lower bdd for T^(*), rough'}
T^{*}\geq \frac{1}{80C_{2}^{*}(q-1)Y}\min\bigg\{\frac{1}{16(C^{*})^{2}},\, \frac{d^{2}}{8}\bigg[\ln\Big(\frac{C_{1}^{*}}{2C_{2}^{*}Y}\Big)\bigg]^{-1}\bigg\}.
\tag{\ref{lower bdd for T^(*), rough}$'$}\ee
In order to simplify (\ref{lower bdd for T^(*), rough'}), if
\be\label{additional bound on Y}
Y\leq \min\Big\{\frac{2C_{2}^{*}}{C_{1}^{*}},\, \frac{C_{1}^{*}}{2C_{2}^{*}}\exp\big[-2d^{2}(C^{*})^{2}\big]\Big\},\ee
then
\begin{align*}
2d^{2}(C^{*})^{2}\leq\ln\Big(\frac{C_{1}^{*}}{2C_{2}^{*}Y}\Big) &\leq 2\ln\Big(\frac{1}{Y}\Big)=2|\ln Y|.
\end{align*}
Taking reciprocal of the above inequality and multiplying by $d^{2}/8$, we obtain
\[\frac{d^{2}}{16|\ln Y|}\leq\frac{d^{2}}{8}\bigg[\ln\Big(\frac{C_{1}^{*}}{2C_{2}^{*}Y}\Big)\bigg]^{-1}\leq \frac{1}{16(C^{*})^{2}}.\]
Therefore, (\ref{lower bdd for T^(*), rough'}) yields
\[T^{*}\geq \frac{1}{80C_{2}^{*}(q-1)Y}\,\frac{d^{2}}{16|\ln Y|}=\frac{C}{(q-1)Y|\ln Y|}.\]
Combining the assumptions (\ref{small surface area'}) and (\ref{additional bound on Y}) together, it suffices to require $Y\leq Y_{0}/q$, where
\[Y_{0}=\min\Big\{\frac{1}{24C_{2}^{*}},\,\frac{2C_{2}^{*}}{C_{1}^{*}},\,\frac{C_{1}^{*}}{4C_{2}^{*}}\exp\big[-2d^{2}(C^{*})^{2}\big]\Big\}\] 
is a constant which only depends on $n$, $\O$ and $d$. Hence, we finish the proof when $n\geq 3$.

When $n=2$, we can argue in the same way as the last paragraph of the proof for Theorem \ref{Thm, lower bdd, convex} to justify the conclusion. 
\end{proof}

\section*{Acknowledgements}
The authors appreciate Willie Wong's suggestions which simplify several proofs in this paper and make the ideas clearer. The authors also thank the referee for the careful reading and helpful suggestions.
\bigskip

%% The Appendices part is started with the command \appendix;
%% appendix sections are then done as normal sections
%% \appendix

%% \section{}
%% \label{}

%% If you have bibdatabase file and want bibtex to generate the
%% bibitems, please use
%%
%%  \bibliographystyle{elsarticle-num} 
%%  \bibliography{<your bibdatabase>}

%% else use the following coding to input the bibitems directly in the
%% TeX file.

\bibliographystyle{plain}
\bibliography{References}

\end{document}